\newtheorem{prop}{Proposition}[section]
\newtheorem{thm}[prop]{Theorem}
\newtheorem{lem}[prop]{Lemma}
\newtheorem{cor}[prop]{Corollary}
\newtheorem*{rem}{Remark}
\newcommand{\E}{\mathbf{E}}		%Expected Value Functional
\newcommand{\V}{\mathbf{Var}}		%Variance Functional
\newcommand{\Cov}{\mathbf{Cov}}		%Covariance Functional
\newcommand{\SM}{\mathcal{E}}		%Resolution of the Identity/Spectral Measure
\newcommand{\EM}{\mathcal{E}}		%Spectral Measure/Eigenmeasure
\newcommand{\N}{\mathbf{N}}		%Natural Numbers
\newcommand{\R}{\mathbf{R}}		%Real Numbers
\newcommand{\C}{\mathbf{C}}		%Complex Numbers
\newcommand{\F}{\mathcal{F}}		%sigma-field
\newcommand{\B}{\mathcal{B}}		%Borel-fields
\newcommand{\est}[2]{\overline{#1}_{#2}}	%MCMC Estimate
\newcommand{\X}{\mathbf{X}}		%Set X
\newcommand{\XS}{\mathcal{X}}		%Subset X
\newcommand{\LT}{L^2(\pi)}		%L^2(\pi) space
\newcommand{\LTN}{L^2_0(\pi)}		%L^2_0(\pi) space
\renewcommand{\H}{\mathbf{H}}		%Hilbert Space
\newcommand{\norm}[1]{\left\lVert#1\right\rVert}	%Norm
\DeclareMathOperator{\range}{range}	%Range space
\DeclareMathOperator{\domain}{domain}	%Domain space
\DeclareMathOperator{\nul}{null}	%Null space
\newcommand{\lft}{\langle}
\newcommand{\rgt}{\rangle}
\newcommand{\Vect}{\mathbf{V}}		%Vector space
\renewcommand{\P}{\mathcal{P}}		%Markov Chain Operator
\newcommand{\Q}{\mathcal{Q}}		%Markov Chain Operator
\newcommand{\RO}{\mathcal{R}}		%Markov Chain Operator
\newcommand{\id}{\mathcal{I}}		%Identity Operator
\newcommand{\I}{\mathbf{1}}		%Indicator function
\newcommand{\e}{\epsilon}		%Epsilon
\newcommand{\de}{\delta}		%Delta
\newcommand{\D}{\Delta}			%Capital Delta
\renewcommand{\l}{\lambda}		%Lambda
\newcommand{\g}{\gamma}			%Gamma
\newcommand{\s}{\sigma}			%Sigma
\renewcommand{\a}{\alpha}		%Alpha
\newcommand{\one}{\mathbbm{1}}		%Constant function
\title{Comparing the Efficiency of General State Space Reversible MCMC
Algorithms}
\author{}
\date{}
\begin{document}

\maketitle
\vspace{-1.5cm}
\begin{tabular}{ c c c }
	\large\text{Geoffrey T.\ Salmon}
	&\large\text{~~~~Jeffrey S.\ Rosenthal}\\
	\small\texttt{geoffrey.salmon@mail.utoronto.ca}
	&\small\texttt{~~~~jeff@math.toronto.edu}
\end{tabular}
\begin{center}
	\textit{Department of Statistical Sciences, University of Toronto}
\end{center}
\begin{center}
	\large\text{August 6, 2024}
\end{center}

\smallskip
\begin{abstract}
	We review and provide new proofs of results used to compare the efficiency of
	estimates generated by reversible Markov chain Monte Carlo
	(MCMC) algorithms on a general state space. We provide a full
	proof of the formula for the asymptotic variance for real-valued functionals
	on $\varphi$-irreducible reversible Markov chains, first introduced by Kipnis
	and Varadhan in
	\cite{kipnis varadhan}. Given two Markov kernels $P$ and $Q$ with stationary
	measure $\pi$, we say that the Markov kernel $P$ efficiency dominates
	the Markov kernel $Q$ if the asymptotic variance with respect to $P$ is at most
	the asymptotic variance with respect to $Q$ for every real-valued functional $f\in\LT$.
	Assuming only a basic background in functional analysis,
	we prove that for two $\varphi$-irreducible reversible Markov kernels $P$ and $Q$,
	$P$ efficiency dominates $Q$ if and only if the operator $\Q-\P$, where $\P$ is
	the operator on $\LT$ that maps $f\mapsto\int f(y)P(\cdot,dy)$ and similarly for
	$\Q$, is positive on $\LT$, i.e.\ $\lft f,\left(\Q-\P\right)f\rgt\geq0$ for every
	$f\in\LT$. We use this result to show that reversible antithetic kernels are more
	efficient than i.i.d.\ sampling, and that efficiency dominance is a partial
	ordering on $\varphi$-irreducible reversible Markov kernels. We also provide a
	proof based on that of Tierney in \cite{tierney} that Peskun dominance is
	a sufficient condition for efficiency dominance for reversible kernels.
	Using these results, we show that Markov kernels formed by randomly selecting
	other ``component'' Markov kernels will always efficiency dominate another Markov
	kernel formed in this way, as long as the component kernels of the former efficiency
	dominate those of the latter. These results on the efficiency dominance of
	combining component kernels generalises the
	results on the efficiency dominance of combined chains introduced by Neal and
	Rosenthal in \cite{finite} from finite state spaces to general state spaces.
\end{abstract}

\section{Introduction}\label{sect:intro}

A common problem in statistics and other areas,
is that of estimating the average value of a function $f:\X\to\R$
with respect to a probability measure $\pi$. The domain of $f$, $\X$, is called the
state space. When the probability measure
$\pi$ is complicated and the expected value of $f$ with respect to $\pi$, $\E_{\pi}(f)$,
can't be computed directly, Markov chain
Monte Carlo (MCMC) algorithms are very effective (see \cite{general}). In MCMC,
the solution is to find a Markov chain $\{X_k\}_{k\in\N}$ with
underlying Markov kernel $P$, and estimate the
expected value of $f$ with respect to $\pi$ as
$$\E_{\pi}(f)\approx\frac{1}{N}\sum_{k=1}^Nf(X_k)=:\est{f}{N}.$$
The advantage is that the Markov kernel $P$ provides much simpler probability measures
at each step, making it easier to compute, while the law of the Markov chain, $X_k$,
approaches the probability distribution $\pi$.

When the chosen function $f$ is in $\LT$, i.e.\ when
$\int_{\X}\left|f(x)\right|^2\pi(dx)\\=\E_{\pi}(|f|^2)<\infty$, one measure of the
effectiveness of the chosen Markov kernel for the function $f$, is the
\textit{asymptotic variance} of $f$ using the kernel $P$, $v(f,P)$, defined as
$$v(f,P):=
\lim_{N\to\infty}\left[N\V\left(\frac{1}{N}\sum_{k=1}^Nf(X_k)\right)\right]=
\lim_{N\to\infty}\left[\frac{1}{N}\V\left(\sum_{k=1}^Nf(X_k)\right)\right],$$
where $\{X_k\}_{k\in\N}$ is a Markov chain with kernel $P$, started in stationarity
(i.e.\ $X_1\sim\pi$).

Thus if $v(f,P)$ is finite, we would expect the variance of the estimate
$\est{f}{N}$ to be near $v(f,P)/N$. Furthermore, if $P$ is $\varphi$-irreducible and
reversible, a central limit theorem (CLT) holds whenever $v(f,P)$ is finite, and the
variance of said CLT is the asymptotic variance, i.e.\
$\sqrt{N}\left(\est{f}{N}-\E_{\pi}(f)\right)\overset{d}{\to}N(0,v(f,P))$ (see
\cite{kipnis varadhan}).
For further reference, see \cite{jones}, \cite{geyer}, \cite{new tierney}, \cite{general}
and \cite{kipnis varadhan}.

$X_1$ is not usually sampled from
$\pi$ directly, but if $P$ is $\varphi$-irreducible, then we can get very close to
sampling from $\pi$ directly by running the chain for a number of iterations before
using the samples for estimation.

In practice, it is common not to know in advance which function $f$ will be needed,
or need estimates for multiple functions. In these cases it is useful
to have a Markov chain to run estimates for various functions
simultaneously. Thus, we would like the variance of our estimates, and thus the
asymptotic variance, to be as low as possible for not just one function $f:\X\to\R$,
but as low as possible for every function $f:\X\to\R$ simultaneously.
This gives rise to the notion of an ordering of Markov kernels based on the
asymptotic variance of functions in $\LT$. Given two Markov kernels $P$ and $Q$ with
stationary distribution $\pi$, we
say that $P$ \textit{efficiency dominates} $Q$ if for every $f\in\LT$,
$v(f,P)\leq v(f,Q)$.

In this paper, we focus our attention on reversible Markov
kernels. Many important algorithms, most notably the Metropolis-Hastings algorithm,
are reversible (see \cite{tierney}, \cite{general}). 

Aside from Section \ref{sect:mixing}, many of the results of this paper are known but are
scattered in the literature, have incomplete or unclear proofs, or are missing proofs altogether.
We present
new clear, complete, and accessible proofs, using basic functional analysis where
very technical results were previously used, most notably in the proof of Theorem
\ref{equiv}. We show how once Theorem \ref{equiv} is established, many further
results are vastly simplified. This paper is self-contained assuming basic
Markov chain theory and functional analysis.

In Section \ref{sect:formula}, we provide a full proof of the formula for the
asymptotic variance of $\varphi$-irreducible reversible Markov kernels established by
Kipnis and Varadhan in \cite{kipnis varadhan},
$$v(f,P)=\int_{[-1,1)}\frac{1+\l}{1-\l}\,\EM_{f,\P}(d\l).$$
We also provide a full proof of a useful characterisation of the asymptotic variance
for aperiodic Markov kernels.

In Section \ref{sect:equivs}, we use the above formula as well as some
functional analysis from Section \ref{sect:lemma}, to show that efficiency dominance is
equivalent to a much simpler condition for $\varphi$-irreducible reversible kernels;
given Markov kernels $P$ and $Q$, for every $f\in\LTN$,
$\lft f,\P f\rgt\leq\lft f,\Q f\rgt$ (Theorem \ref{equiv}).
This equivalent condition is sometimes called
\textit{covariance dominance} (see \cite{mira geyer}).
The functional analysis used in the proof of Theorem \ref{equiv} is derived
from the basics in Section \ref{sect:lemma}.
We use this theorem to show that antithetic Markov kernels are more efficient than
i.i.d.\ sampling (Proposition \ref{antithetic}), and show that efficiency dominance
is a partial ordering (Theorem \ref{partial ordering}).

In Section \ref{sect:mixing}, we generalise the results on the efficiency dominance
of combined chains in \cite{finite} from finite state spaces to general state spaces.
Given reversible Markov kernels $P_1,\dots,P_l$ and $Q_1,\dots,Q_l$, we show that if
$\Q_k-\P_k$ is a positive operator on $\LTN$ for every $k$, and $\{\a_1,\dots,\a_l\}$ is a set
of mixing probabilities such that $P=\sum\a_kP_k$ and $Q=\sum\a_kQ_k$ are $\varphi$-irreducible,
then $P$ efficiency dominates $Q$ (Theorem \ref{mix}). This can be used to show that a
random-scan Gibbs sampler with more efficient component kernels will always be more efficient.
We also show that for two combined kernels differing in one component, one efficiency
dominates the other if and only if it's unique component kernel efficiency dominates the other's
(Corollary \ref{3 comb}).

In Section \ref{sect:peskun}, we consider Peskun dominance, or dominance off
the diagonal (see \cite{peskun}, \cite{tierney}). We say that a Markov kernel $P$
\textit{Peskun dominates} another kernel $Q$, if for $\pi$-a.e.\ $x\in\X$,
for every measurable set $A$,
$P(x,A-\{x\})\geq Q(x,A-\{x\})$. In Lemma \ref{pesk then pos}, we follow the
techniques of Tierney in \cite{tierney} to show that if $P$ Peskun dominates $Q$,
then $\Q-\P$ is a positive operator. We then show that with Theorem \ref{equiv}
established, the proof that Peskun dominance implies efficiency dominance is
simplified (Theorem \ref{peskun}).

\section{Background}\label{sect:prelims}

We are given the probability space $(\X,\F,\pi)$, where we assume the state
space $\X$ is non-empty.

\subsection{Markov Chain Background}\label{subsect:mc prelims}

A Markov kernel on $(\X,\F)$ with $\pi$ as a stationary
distribution is a function $P:\X\times\F\to[0,\infty)$ such that $P(x,\cdot)$
is a probability measure for every $x\in\X$, $P(\cdot,A)$ is a measurable
function for every $A\in\F$, and $\int_{\X}P(x,A)\pi(dx)=\pi(A)$ for every $A\in\F$.
The Markov kernel $P$ is \textit{reversible} with respect to $\pi$ if
$P(x,dy)\pi(dx)=P(y,dx)\pi(dy)$. A Markov kernel $P$ is \textit{$\varphi$-irreducible}
if there exists a non-zero $\s$-finite measure $\varphi$ on $(\X,\F)$ such that for
every $A\in\F$ with $\varphi(A)>0$, for every $x\in\X$ there exists $n\in\N$ such
that $P^n(x,A)>0$.

The space $\LT$ is defined rigorously as the set of
equivalence classes of $\pi$-square-integrable real-valued functions, with two functions
$f$ and $g$ being equivalent if $f=g$ $\pi$-a.e. Less rigorously,
$\LT$ is simply the set of $\pi$-square-integrable real-valued
functions. When this set is
endowed with the inner-product $\lft\cdot,\cdot\rgt:\LT\times\LT\to\R$ such
that $f\times g\mapsto\lft f,g\rgt:=\int_{\X}f(x)g(x)\pi(dx)$,
this space becomes a real Hilbert space. (When we are also dealing with complex functionals,
we define the inner-product instead to be
$f\times g\mapsto\lft f,g\rgt:=\int_{\X}f(x)\overline{g(x)}\pi(dx)$, where $\overline{\a}$
is the complex conjugate of $\a\in\C$, and $\LT$ becomes a complex Hilbert space. As we are
only dealing with real-valued functions, we do not need this distinction.)
Note that the norm induced by the
inner-product is the map $f\mapsto\norm{f}:=\sqrt{\lft f,f\rgt}$.

Recall from Section \ref{sect:intro} that for a function $f\in\LT$, it's \textit{asymptotic
variance} with respect to the Markov kernel $P$, denoted $v(f,P)$, is defined as
$v(f,P)\::=\:
\lim_{N\to\infty}\left[N\V\left(\frac{1}{N}\sum_{k=1}^Nf(X_k)\right)\right]\:=\:
\lim_{N\to\infty}\left[\frac{1}{N}\V\left(\sum_{k=1}^Nf(X_k)\right)\right]$, where\\
$\{X_k\}_{k\in\N}$ is a Markov chain with Markov kernel $P$ started in stationarity. Also from
Section \ref{sect:intro}, recall that given two Markov kernels $P$ and $Q$, both with
stationary measure $\pi$, $P$ \textit{efficiency dominates} $Q$ if
$v(f,P)\:\leq\:v(f,Q)$ for every $f\in\LT$.

For every Markov kernel $P$, we can define a linear operator $\P$ with
the space of $\F$ measurable functions as it's domain by
$$\P f(\cdot):=\int_{y\in\X}f(y)P(\cdot,dy).$$ For every Markov kernel, we denote
the associated linear operator defined above by it's letter in calligraphics.
If $P$ is stationary with respect to $\pi$, the range of $\P$ restricted to
$\LT$ is a subset of $\LT$, as for every $f\in\LT$, by Jensen's inequality
\begin{equation}\label{ineq:1}
	\int_{x\in\X}\left|\P f(x)\right|^2\pi(dx)\leq
	\iint_{x,y\in\X}\left|f(y)\right|^2P(x,dy)\pi(dx)=
	\int_{y\in\X}\left|f(y)\right|^2\pi(dy)<\infty.
\end{equation}
(see \cite{baxter}).
In this paper, we will only deal with functions in $\LT$, and thus view $\P$
as a map from $\LT\to\LT$, or a subset thereof.

Notice that the constant function, $\one:\X\to\R$ such that $\one(x)=1$ for
every $x\in\X$, is in $\LT$, and furthermore, as $P(x,\cdot)$ is a
probability measure for every $x\in\X$, $\P\one=\one$. Thus $\one$ is an eigenfunction
of $\P$ with eigenvalue $1$. We define the space $\LTN$ to be the subspace of
$\LT$ perpendicular to $\one$, i.e.\ $\LTN:=\{f\in\LT|f\perp\one\}=
\{f\in\LT|\lft f,\one\rgt=0\}=\{f\in\LT|\E_{\pi}(f)=0\}$. Notice that if $\P$
is restricted to $\LTN$ and $P$ is stationary with respect to $\pi$, then it's range
is contained in $\LTN$, as for every $f\in\LTN$, by Fubini's Theorem,
$\lft\P f,\one\rgt=\int_{y\in\X}f(y)\int_{x\in\X}P(x,dy)\pi(dx)=
\int_{\X}f(y)\pi(dy)=\lft f,\one\rgt=0$.

Furthermore, notice that if $P$ and $Q$ are Markov kernels with stationary measure
$\pi$, $P$ efficiency dominates $Q$ if and only if $P$ efficiency dominates $Q$ on
the smaller subspace $\LTN$. The forward implication is trivial as $\LTN\subseteq\LT$,
and for the converse, notice that for every $f\in\LT$, $v(f,P)=v(f_0,P)$ and $v(f,Q)=v(f_0,Q)$,
where $f_0:=f-\E_{\pi}(f)\in\LTN$.
Thus when talking about efficiency dominance, we can restrict ourselves to $\LTN$
instead of dealing with all of $\LT$, and we get rid of the eigenfunction $\one$.
Unless stated otherwise, we will consider $\P$ as an operator on and to $\LTN$.

A Markov kernel $P$ is \textit{periodic} with period $d\geq2$ if there exists
$\XS_1,\dots,\XS_d\in\F$ such that $\XS_k\cap\XS_j=\emptyset$ for every $j\neq k$,
and for every $i\in\{1,\dots,d-1\}$, $P(x,\XS_{i+1})=1$ for every
$x\in\XS_i$ and $P(x,\XS_1)=1$ for every $x\in\XS_d$. The sets $\XS_1,\dots,\XS_d\in
\F$ described above are called a periodic decomposition of $P$. A
Markov kernel $P$ is \textit{aperiodic} if it is not periodic.

A common definition related
to the efficiency of Markov kernels is the \textit{lag-k autocovariance}. For
an $\F$-measurable function $f$, the lag-k autocovariance, denoted $\g_k$, is the
covariance between $f(X_0)$ and $f(X_k)$, where $\{X_t\}_{t\in\N}$ is a Markov
chain run from stationarity with kernel $P$, i.e.\
$\g_k:=\Cov_{\pi,P}(f(X_0),f(X_k))$. When the function $f$ is in $\LTN$, notice
$\g_k=\E_{\pi,P}(f(X_0)f(X_k))=\lft f,\P^kf\rgt$.

We denote the Markov kernel associated with i.i.d.\ sampling from $\pi$ as $\Pi$,
i.e.\ $\Pi:\X\times\F\to[0,\infty)$ such that $\Pi(x,A)=\pi(A)$ for every $x\in\X$
and $A\in\F$. Notice that for every $f\in\LTN$, $\Pi f(x)=\E_{\pi}(f)=0$ for every
$x\in\X$. Thus $\Pi$ restricted to $\LTN$ is the zero function on $\LTN$.

\subsection{Functional Analysis Background}\label{subsect:fa prelims}

Here we present some functional analysis that will be used throughout the paper. For
a proper introduction to functional analysis, see Rudin's \cite{rudin}, or Conway's \cite{conway}.

An operator $T$ on a Hilbert space $\H$ is called \textit{bounded} if
there exists $C>0$ such that for every $f\in\H$, $\norm{Tf}\leq C\norm{f}$. The \textit{norm}
of a bounded operator is defined as the smallest such constant $C>0$ such that the above holds,
i.e.\ $\norm{T}:=\inf\{C>0:\norm{Tf}\leq C\norm{f},\quad \forall f\in\H\}$. A
bounded operator $T$ is called \textit{invertible} if it is bijective, and the
inverse of $T$, $T^{-1}$, is bounded. 

\textit{Unbounded} operators on $\H$ are linear operators $T$ such that there is no $C>0$
such that $\norm{Tf}\leq C\norm{f}$ for every $f\in\H$. Oftentimes, unbounded operators $T$
are not defined on the whole space $\H$, but only on a subset of $\H$. An unbounded operator
$T$ is \textit{densely defined} if the domain of $T$ is dense in $\H$.

The \textit{adjoint} of a bounded operator $T$
is the unique bounded operator $T^*$ such that $\lft Tf,g\rgt=\lft f,T^*g\rgt$ for
every $f,g\in\H$. Similarly, if $T$ is a densely defined operator, then the \textit{adjoint}
of $T$ is the linear operator $T^*$ such that $\lft Tf,g\rgt=\lft f,T^*g\rgt$ for every
$f\in\domain(T)$ and $g\in\H$ such that $f\mapsto\lft Tf,g\rgt$ is a bounded linear functional
on $\domain(T)$. Thus we define $\domain(T^*):=\{g\in\H|f\mapsto\lft Tf,g\rgt \text{ is a bounded
linear functional on }\domain(T)\}$. These two definitions are equivalent when $T$ is bounded.

A bounded operator $T$ is called \textit{normal}
if $T$ commutes with it's adjoint, $TT^*=T^*T$, and \textit{self-adjoint} if
$T$ equals it's adjoint, $T=T^*$. A densely defined operator $T$ is called \textit{self-adjoint}
if $T=T^*$ and $\domain(T)=\domain(T^*)$.

$\P$ restricted to $\LT$ is self-adjoint if and only if $P$ is reversible. As $\LTN
\subseteq\LT$, if $P$ is reversible with respect to $\pi$, then $\P$ restricted to $\LTN$
is self-adjoint.

The \textit{spectrum} of an operator $T$ is the set\\
$\s(T):=\{\l\in\C|T-\l\id\text{ is not invertible}\}$,
where $\id$ is the identity operator.
Note that invertible is meant in the context of bounded linear operators given
above. If the operator $T$ is self-adjoint, the spectrum of $T$ is real, i.e.\ $\s(T)
\subseteq\R$ (see Theorem 12.26 (a) in \cite{rudin}). It is important to note that
in the case the underlying
Hilbert space of the operator $T$ is finite-dimensional, as is the case for
$\LT$ and $\LTN$ when $\X$ is finite, that the spectrum of $T$ is exactly the set
of eigenvalues of $T$.

An operator $T$ on a Hilbert space $\H$ is called \textit{positive} if for every
$f\in\H$, $\lft f,Tf\rgt\geq0$. As we shall see in Lemma \ref{pos eigenvalues},
if $T$ is bounded and normal, then $T$ is positive if and only if the spectrum
of $T$ is positive, i.e.\ $\s(T)\subseteq[0,\infty)$. It is important to note
that when the Hilbert space $\H$ is a real Hilbert space, it is not necessarily
true that if $T$ is positive and bounded then $T$ is self-adjoint. This is an important
distinction, as this is true when $\H$ is a complex Hilbert space.

Furthermore, as shown by inequality (\ref{ineq:1}), (which is $\norm{\P f}^2\leq
\norm{f}^2$), the norm of $\P$ is less than or equal to $1$. This also bounds the
spectrum of $\P$ to $\l\in\C$ such that $|\l|\leq\norm{\P}\leq1$.
If $P$ is reversible, then $\P$ is self-adjoint and thus the spectrum of $\P$
is real, $\s(\P)\subseteq\R$. Thus $\s(\P)\subseteq[-1,1]$.

When $P$ is reversible, as $\P$ is self-adjoint, it is normal, and thus by the
Spectral Theorem (see Theorem 12.23 of \cite{rudin}, Chapter 9 Theorem 2.2 of \cite{conway}),
$\P$ has a spectral decomposition, $\P=\int_{\s(\P)}\l\SM_{\P}(d\l)$, where
$\SM_{\P}$ is the spectral measure of $\P$. For a bounded normal linear operator
$T:\H\to\H$ where $\H$ is a Hilbert space, for every $f\in\H$, we define the
induced measure $\EM_{f,T}$ as $\EM_{f,T}(A):=\lft f,\SM_{T}(A)f\rgt$ for every
Borel measurable set $A$ of $\C$. For every bounded Borel measureable function $\phi:\C\to\C$
and for
every bounded self adjoint operator $T$, define $\phi(T):=\int\phi(\l)\SM_{T}(d\l)$.
So, for any bounded Borel measureable function
$\phi:\C\to\C$, for every $f\in\H$,
$$\lft f,\phi(T)f\rgt=\int_{\s(T)}\phi(\l)\EM_{f,T}(d\l).$$

We will also usually assume that the Markov kernel $P$ is
$\varphi$-irreducible, as when $P$ is $\varphi$-irreducible,
the constant function is the only Eigenfunction (up to a scalar multiple)
of $\P$ with eigenvalue $1$ (see Lemma 4.7.4 of \cite{equivs}).
Thus if $P$ is $\varphi$-irreducible, then $1$ is not an eigenvalue of
$\P$ when restricted to $\LTN$. Note however that this does not mean that
$1\notin\s(\P)$ when restricted to $\LTN$.

\section{Asymptotic Variance}\label{sect:formula}

We now provide a detailed proof of the formula for the asymptotic variance of
$\varphi$-irreducible reversible Markov kernels, originally introduced by Kipnis
and Varadhan in \cite{kipnis varadhan}. Also in this section, we prove another
more familiar and practical characterisation of the asymptotic variance for
$\varphi$-irreducible reversible aperiodic Markov kernels (see \cite{green han},
\cite{geyer}, and \cite{jones}).

\begin{thm}\label{formula}
	If $P$ is a $\varphi$-irreducible reversible Markov kernel
	with stationary distribution $\pi$,
	then for every $f\in\LTN$,
	$$v(f,P)\:=\:\int_{\l\in[-1,1)}\frac{1+\l}{1-\l}\,\EM_{f,\P}(d\l),$$
	where $\EM_{f,\P}$ is the measure induced by the spectral measure of $\P$ (see Section
	\ref{subsect:fa prelims}). Note however, that this may still diverge to $\infty$.
\end{thm}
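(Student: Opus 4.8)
The plan is to expand the finite-$N$ variance into autocovariances, rewrite the whole expression through the spectral measure of $\P$, and then pass to the limit under the integral sign. Since the chain starts in stationarity and $f\in\LTN$ has $\E_\pi(f)=0$, I would first write $\V\!\left(\sum_{k=1}^N f(X_k)\right)=\sum_{j,k=1}^N\Cov(f(X_j),f(X_k))=\sum_{j,k=1}^N\g_{|j-k|}$, and then invoke the background identity $\g_m=\lft f,\P^m f\rgt$ together with the spectral representation $\lft f,\P^m f\rgt=\int_{[-1,1]}\l^m\,\EM_{f,\P}(d\l)$. Collecting the double sum into a single kernel (the pair $(j,k)$ with $|j-k|=m$ occurs $2(N-m)$ times for $m\geq1$) and dividing by $N$ gives
\[
\frac1N\V\!\left(\sum_{k=1}^N f(X_k)\right)=\int_{[-1,1]} g_N(\l)\,\EM_{f,\P}(d\l),\qquad g_N(\l):=1+2\sum_{m=1}^{N-1}\Big(1-\tfrac mN\Big)\l^m,
\]
where interchanging the finite sum with the integral is immediate. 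This reduces the theorem to evaluating $\lim_{N\to\infty}\int g_N\,d\EM_{f,\P}$.

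Next I would compute the pointwise limit of $g_N$. A direct summation of the geometric series yields the closed form $g_N(\l)=\frac{1+\l}{1-\l}-\frac{2\l(1-\l^N)}{N(1-\l)^2}$ for $\l\neq1$, from which two facts follow at once: for each fixed $\l\in(-1,1)$ the correction term is $O(1/N)$ and vanishes, so $g_N(\l)\to\frac{1+\l}{1-\l}$; and at $\l=-1$ the formula gives $g_N(-1)\in\{0,1/N\}\to0=\frac{1+\l}{1-\l}\big|_{\l=-1}$. Hence $g_N\to\frac{1+\l}{1-\l}$ pointwise on all of $[-1,1)$. The endpoint $\l=1$ is disposed of by $\varphi$-irreducibility: since $1$ is not an eigenvalue of $\P$ on $\LTN$, the spectral projection $\SM_\P(\{1\})$ is zero, so $\EM_{f,\P}(\{1\})=\lft f,\SM_\P(\{1\})f\rgt=0$ and the finite value $g_N(1)=N$ contributes nothing; thus $\int_{[-1,1]}=\int_{[-1,1)}$ throughout.

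The main obstacle is the interchange of limit and integral, because the limiting integrand $\frac{1+\l}{1-\l}$ is unbounded as $\l\uparrow1$, so no single dominating function works on all of $[-1,1)$ and indeed the integral may legitimately diverge. I would resolve this by splitting $[-1,1)=[-1,0)\cup[0,1)$. On $[0,1)$ the coefficient of $\l^m$ in $g_N$ is $2(1-m/N)$, which increases with $N$ while new nonnegative terms appear, so $g_N(\l)$ increases monotonically to $\frac{1+\l}{1-\l}$ for $\l\geq0$; the monotone convergence theorem then gives $\int_{[0,1)}g_N\,d\EM_{f,\P}\to\int_{[0,1)}\frac{1+\l}{1-\l}\,d\EM_{f,\P}\in[0,\infty]$. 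On $[-1,0)$ the closed form shows $g_N$ is uniformly bounded (as $\frac{1+\l}{1-\l}\in[0,1)$ and $\big|\frac{2\l(1-\l^N)}{N(1-\l)^2}\big|\leq4/N$ there), and since $\EM_{f,\P}$ is a finite measure of total mass $\norm f^2$, the dominated convergence theorem gives convergence to the finite integral $\int_{[-1,0)}\frac{1+\l}{1-\l}\,d\EM_{f,\P}$. Adding the two pieces yields $v(f,P)=\lim_N\frac1N\V\!\left(\sum_{k=1}^N f(X_k)\right)=\int_{[-1,1)}\frac{1+\l}{1-\l}\,\EM_{f,\P}(d\l)$, possibly $+\infty$, which is the claim. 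The real work lies in the monotonicity on $[0,1)$ and the uniform bound on $[-1,0)$; once the closed form for $g_N$ is in hand these are routine, and the remainder is spectral-theorem bookkeeping.
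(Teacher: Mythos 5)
Your proof is correct, and it shares the paper's overall skeleton: expand $\frac{1}{N}\V\left(\sum_{k=1}^N f(X_k)\right)$ into weighted autocovariances, rewrite it via the spectral theorem as $\int g_N\,d\EM_{f,\P}$, dispose of the endpoint $\l=1$ using $\varphi$-irreducibility (so that $1$ is not an eigenvalue of $\P$ on $\LTN$ and hence $\SM_{\P}(\{1\})=0$), and pass to the limit by splitting the domain and applying monotone and dominated convergence. Where you genuinely depart from the paper is in the decomposition and the handling of the left endpoint. The paper splits $[-1,1)$ into \emph{three} pieces $\{-1\}$, $(-1,0]$, $(0,1)$, treats the atom at $-1$ by a separate combinatorial evaluation of the alternating sum ($N^{-1}\sum_{k=1}^{N}(-1)^k(N-k)=\lfloor -N/2\rfloor/N\to-1/2$), and must then verify that recombining the three limits involves no $\infty-\infty$ cancellation. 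Your closed form $g_N(\l)=\frac{1+\l}{1-\l}-\frac{2\l(1-\l^N)}{N(1-\l)^2}$ makes all of that unnecessary: the pointwise limit at $\l=-1$ falls out of the same formula as everywhere else, so the atom is absorbed into the dominated-convergence piece $[-1,0)$, and only two pieces are needed. The closed form buys one more thing, and it is not cosmetic: it gives the explicit uniform bound $|g_N(\l)|\leq 1+4/N$ on $[-1,0)$, so a constant dominates and DCT applies against the finite measure $\EM_{f,\P}$. The paper is in fact slightly loose at the corresponding step --- the only bound it exhibits on $(-1,0]$ is $\sum_k|\l|^k=\frac{|\l|}{1-|\l|}$, which blows up as $\l\to-1^{+}$ and need not be $\EM_{f,\P}$-integrable (mass of $\EM_{f,\P}$ may accumulate at $-1$), so it cannot serve as the dominating function; what is really needed is a uniform bound such as yours, or the alternating-series estimate $\left|\sum_{k=1}^{N}\frac{N-k}{N}\l^k\right|\leq|\l|$ valid for $\l\in(-1,0]$. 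Your write-up supplies this missing ingredient explicitly, which makes it, if anything, tighter than the original.
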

\begin{proof}
	For every $f\in\LTN$, by expanding the squares of
	$\V_{\pi,P}\left(\sum_{k=1}^Nf(X_k)\right)$, as $\E_{\pi}(f)=0$ (by
	definition of $\LTN$),
	$$\frac{1}{N}\V_{\pi,P}\left(\sum_{k=1}^Nf(X_k)\right)\,=\,
	\norm{f}^2+2\sum_{k=1}^N\left(\frac{N-k}{N}\right)\lft f,\P^kf\rgt.$$
	Thus as $\lft f,\P^kf\rgt\:=\:\int_{\s(\P)}\l^k\,\EM_{f,\P}(d\l)$ for every
	$k\in\N$,
	\begin{align*}
		v(f,P)
		&\,=\,\lim_{N\to\infty}\left[\frac{1}{N}
		\V\left(\sum_{n=1}^Nf(X_n)\right)\right]\\
		&\,=\,\lim_{N\to\infty}\left[\norm{f}^2+2\sum_{k=1}^N
		\left(\frac{N-k}{N}\right)\lft f,\P^kf\rgt\right]\\
		&\,=\,\norm{f}^2+2\lim_{N\to\infty}
		\left[\int_{\l\in\s(\P)}\sum_{k=1}^{\infty}
		\I_{k\leq N}(k)\left(\frac{N-k}{N}\right)
		\l^k\EM_{f,\P}(d\l)\right].\tag{1}\label{eq:1}
	\end{align*}

	In order to deal with the limit in (\ref{eq:1}), we split the integral
	over $\s(\P)$ into three subsets, $(0,1)$, $(-1,0]$ and $\{-1\}$. (Recall
	from Section \ref{sect:prelims} that $\s(\P)\subseteq[-1,1]$, and notice
	that we do not need to worry about $\{1\}$, as $1$ is not an eigenvalue
	of $\P$ on $\LTN$ as $P$ is $\varphi$-irreducible (see Section
	\ref{subsect:mc prelims}), and thus $\EM_{f,\P}(\{1\})=0$ by
	Lemma \ref{E(1)=0} below).

	For the first two subsets, for every fixed $\l\in(-1,0]\cup(0,1)=(-1,1)$,
	notice that for every $N\in\N$,
	$$\sum_{k=1}^{\infty}\left|\I_{k\leq N}(k)
	\left(\frac{N-k}{N}\right)\l^k\right|
	\leq\sum_{k=1}^{\infty}\left|\l^k\right|=\frac{|\l|}{1-|\l|}<\infty,$$
	so the sum is absolutely summable. Thus we can pull the pointwise
	limit through and show that for every $\l\in(-1,1)$, by the geometric
	series $\sum_{k=1}^{\infty}r^k=\frac{r}{1-r}$,
	$\lim_{N\to\infty}\left[\sum_{k=1}^{\infty}\I_{k\leq N}(k)
	\left(\frac{N-k}{N}\right)\l^k\right]
	=\frac{\l}{1-\l}$.

	By the Monotone Convergence Theorem for $\l
	\in(0,1)$ and the Dominated Convergence Theorem for $\l\in(-1,0]$,\\
	$\lim_{N\to\infty}\left[\int_{\l\in(0,1)}\sum_{k=1}^{\infty}
	\I_{k\leq N}(k)\left(\frac{N-k}{N}\right)\l^k\EM_{f,\P}(d\l)\right]
	\,=\,\int_{\l\in(0,1)}\frac{\l}{1-\l}\EM_{f,\P}(d\l),$ \:\:\:and\\
	$\lim_{N\to\infty}\left[\int_{\l\in(-1,0]}\sum_{k=1}^{\infty}
	\I_{k\leq N}(k)\left(\frac{N-k}{N}\right)\l^k\EM_{f,\P}(d\l)\right]
	\,=\,\int_{\l\in(-1,0]}\frac{\l}{1-\l}\EM_{f,\P}(d\l).$
	
	For the last case, the case of $\{-1\}$, notice that for every
	$N\in\N$, (simplifying the equation found in \cite{finite}), denoting the
	floor of $x\in\R$ as $\lfloor x\rfloor$,
	\begin{align*}
		&\sum_{k=1}^{\infty}\I_{k\leq N}(k)\frac{N-k}{N}(-1)^k\EM_{f,\P}
		(\{-1\})
		\:=\:\EM_{f,\P}(\{-1\})N^{-1}\sum_{k=1}^{N}\left[(-1)^k(N-k)\right]\\
		&\:=\:\EM_{f,\P}(\{-1\})N^{-1}\sum_{m=1}^{\lfloor N/2\rfloor}\left[(N-2m)
			-(N-2m+1)\right]\\
		&\:=\:\EM_{f,\P}(\{-1\})N^{-1}\sum_{m=1}^{\lfloor N/2\rfloor}\left(-1
			\right)
		\:=\:\left(\frac{\lfloor-N/2\rfloor}{N}\right)\EM_{f,\P}(\{-1\}).
	\end{align*}

	Thus as
	$\lim_{N\to\infty}\frac{\lfloor-N/2\rfloor}{N}=-1/2$, we have the pointwise
	limit\\
	$\lim_{N\to\infty}\sum%_{k=1}^{\infty}
	\I_{k\leq N}(k) \left(\frac{N-k}{N}\right)\EM_{f,\P}(\{-1\})
	=\left(\frac{-1}{2}\right)\EM_{f,\P}(\{-1\})
	=\left(\frac{\l}{1-\l}\right)\EM_{f,\P}(\{\l\})|_{\l=-1}$.
	
	In order to split the integral in (\ref{eq:1}) into our three pieces,
	$(0,1)$, $(-1,0]$ and $\{-1\}$, and pull the limit through, we have to
	verify that if we do pull the limit through, we are not performing
	$\infty-\infty$. To verify this, it is enough to show that
	$\left|\lim_{N\to\infty}\int_{\l\in(-1,0]}\sum_{k=1}^{\infty}\I_{k\leq N}(k)
	\left(\frac{N-k}{N}\right)\l^k\EM_{f,\P}(d\l)\right|$ and\\
	$\left|\lim_{N\to\infty}\sum_{k=1}^{\infty}\I_{k\leq N}(k)
	\left(\frac{N-k}{N}\right)\EM_{f,\P}(\{-1\})\right|$ are finite. So from
	our work above, we find that\\
	$\left|\lim_{N\to\infty}\int_{\l\in(-1,0]}\sum_{k=1}^{\infty}\I_{k\leq N}(k)
		\left(\frac{N-k}{N}\right)\l^k\EM_{f,\P}(d\l)\right|
	=\left|\int_{\l\in(-1,0]}\frac{\l}{1-\l}\EM_{f,\P}(d\l)\right|<\infty$\\
	and
	$\left|\lim_{N\to\infty}\sum_{k=1}^{\infty}\I_{k\leq N}(k)
		\left(\frac{N-k}{N}\right)\EM_{f,\P}(\{-1\})\right|
	=\left|\left(\frac{-1}{2}\right)\EM_{f,\P}(\{-1\})\right|
	<\infty$.

	So, denoting $H(N,k):=\I_{k\leq N}(k)\left(\frac{N-k}{N}\right)$,
	\begin{align*}
		&\lim_{N\to\infty}\left[\int_{\l\in[-1,1)}\sum_{k=1}^{\infty}
		\I_{k\leq N}(k)\left(\frac{N-k}{N}\right)\l^k\EM_{f,\P}(d\l)\right]\\
		&=\lim_{N\to\infty}\left[\sum_{k=1}^{\infty}H(N,k)(-1)^k\EM_{f,\P}
		(\{-1\})\right.
		+\int_{\l\in(-1,0]}\sum_{k=1}^{\infty}H(N,k)\l^k\EM_{f,\P}(d\l)\\
		&\qquad\left.+\int_{\l\in(0,1)}\sum_{k=1}^{\infty}H(N,k)\l^k
		\EM_{f,\P}(d\l)\right]\\
		&=\lim_{N\to\infty}\sum_{k=1}^{\infty}H(N,k)(-1)^k\EM_{f,\P}(\{-1\})
		+\lim_{N\to\infty}\int_{\l\in(-1,0]}\sum_{k=1}^{\infty}
		H(N,k)\l^k\EM_{f,\P}(d\l)\\
		&\qquad+\lim_{N\to\infty}\int_{\l\in(0,1)}\sum_{k=1}^{\infty}
		H(N,k)\l^k\EM_{f,\P}(d\l)\\
		&=\left(\frac{\l}{1-\l}\right)\EM_{f,\P}(\{\l\})|_{\l=-1}
		+\int_{\l\in(-1,0]}\frac{\l}{1-\l}\EM_{f,\P}(d\l)
		+\int_{\l\in(0,1)}\frac{\l}{1-\l}\EM_{f,\P}(d\l)\\
		&=\int_{\l\in[-1,1)}\frac{\l}{1-\l}\EM_{f,\P}(d\l).
	\end{align*}
	Plugging this into (\ref{eq:1}), and as $\EM_{f,\P}(\{1\})=0$ by Lemma
	\ref{E(1)=0} below, we have
	\begin{align*}
		v(f,P)
		&=\norm{f}^2+2\lim_{N\to\infty}\left[\int_{\l\in\s(\P)}
		\sum_{k=1}^{\infty}
		\I_{k\leq N}(k)\left(\frac{N-k}{N}\right)\l^k\EM_{f,\P}(d\l)\right]\\
		&=\int_{\l\in[-1,1)}\EM_{f,\P}(d\l)+2\int_{\l\in[-1,1)}
		\frac{\l}{1-\l}\EM_{f,\P}(d\l)\\
		&=\int_{\l\in[-1,1)}\frac{1+\l}{1-\l}\EM_{f,\P}(d\l).
	\end{align*}
\end{proof}

\begin{lem}\label{E(1)=0} If $P$ is a $\varphi$-irreducible
	Markov kernel reversible with respect to $\pi$, then for every
	$f\in\LTN$, $\EM_{f,\P}(\{1\})=0$.
\end{lem}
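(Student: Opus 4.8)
The plan is to reduce the claim to the fact, recalled at the end of Section \ref{subsect:fa prelims}, that $1$ is not an eigenvalue of $\P$ on $\LTN$ when $P$ is $\varphi$-irreducible. Fix $f\in\LTN$ and write $\EM_{f,\P}(\{1\})=\lft f,\SM_\P(\{1\})f\rgt$ directly from the definition of the induced measure, where $\SM_\P(\{1\})$ is the spectral projection of $\P$ attached to the singleton $\{1\}$. Since $\SM_\P(\{1\})$ is an orthogonal projection (self-adjoint and idempotent), setting $h:=\SM_\P(\{1\})f$ gives $\EM_{f,\P}(\{1\})=\lft f,\SM_\P(\{1\})^2f\rgt=\lft\SM_\P(\{1\})f,\SM_\P(\{1\})f\rgt=\norm{h}^2$. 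Thus it suffices to prove that $h=0$.

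The key step is to show that $h$ is fixed by $\P$, i.e.\ $\P h=h$, so that $h$ is either zero or an eigenfunction of $\P$ with eigenvalue $1$. I would establish this using the multiplicativity of the projection-valued spectral measure, $\SM_\P(A)\SM_\P(B)=\SM_\P(A\cap B)$, a standard consequence of the Spectral Theorem cited in Section \ref{subsect:fa prelims}, together with the functional calculus $\P=\int\l\,\SM_\P(d\l)$. These give $\P\,\SM_\P(\{1\})=\int\l\,\I_{\{1\}}(\l)\,\SM_\P(d\l)=\SM_\P(\{1\})$, whence $\P h=\P\,\SM_\P(\{1\})f=\SM_\P(\{1\})f=h$. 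Equivalently, one can compute $\norm{(\P-\id)h}^2=\int_{\s(\P)}(\l-1)^2\,\EM_{h,\P}(d\l)$ and observe that $\EM_{h,\P}(A)=\EM_{f,\P}(A\cap\{1\})$ is concentrated at $\l=1$, where the integrand vanishes, again forcing $\P h=h$.

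Once $\P h=h$ is in hand, I would invoke the $\varphi$-irreducibility of $P$: as noted in Section \ref{subsect:fa prelims}, the only eigenfunctions of $\P$ with eigenvalue $1$ are the constant functions, which meet $\LTN$ only in $0$. Hence $h=0$, and therefore $\EM_{f,\P}(\{1\})=\norm{h}^2=0$. I would emphasize that this argument uses only that $1$ fails to be an eigenvalue on $\LTN$, not that $1\notin\s(\P)$: the spectral projection of a single point recovers exactly the corresponding eigenspace $\ker(\P-\id)$, so it is the zero operator precisely when that point carries no eigenfunction, even if it lies in the continuous spectrum.

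I expect the main obstacle to be the bookkeeping of the second step, namely cleanly justifying that the spectral projection $\SM_\P(\{1\})$ lands inside $\ker(\P-\id)$, since this is where the measure-theoretic content of the Spectral Theorem (multiplicativity of $\SM_\P$, or equivalently the scalar functional calculus applied to $\EM_{h,\P}$) is genuinely used. The remaining steps are formal once this identification is available.
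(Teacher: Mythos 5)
Your proof is correct, and it rests on the same reduction as the paper's: both arguments come down to the fact that $\varphi$-irreducibility makes $1$ a non-eigenvalue of $\P$ on $\LTN$, and then conclude that the spectral projection $\SM_{\P}(\{1\})$ vanishes. The difference lies in how that last bridge is crossed. The paper cites Theorem 12.29(b) of Rudin as a black box: for a bounded normal operator, the spectral projection at a point whose eigenspace is trivial is the zero operator. You instead prove the needed half of that theorem directly: writing $h=\SM_{\P}(\{1\})f$, you use self-adjointness and idempotence of the projection to get $\EM_{f,\P}(\{1\})=\norm{h}^2$, then multiplicativity of the projection-valued measure (equivalently the functional-calculus identity $\P\,\SM_{\P}(\{1\})=\int\l\,\I_{\{1\}}(\l)\,\SM_{\P}(d\l)=\SM_{\P}(\{1\})$) to get $\P h=h$, so that $h$ lies in $\ker(\P-\id)\cap\LTN=\{0\}$. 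This makes your argument self-contained modulo the standard properties of spectral measures already invoked in Section \ref{subsect:fa prelims}, at the cost of the extra bookkeeping you anticipate; the paper's version is shorter but leans on an external citation whose proof is essentially the computation you carry out. Your closing observation --- that only the failure of $1$ to be an eigenvalue is needed, not $1\notin\s(\P)$ --- is exactly the distinction the paper itself stresses at the end of Section \ref{subsect:fa prelims}, and both proofs respect it.
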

\begin{proof} As outlined in \cite{equivs} Lemma 4.7.4,
	as $P$ is $\varphi$-irreducible, the constant function
	is the only eigenfunction of $\P$ with eigenvalue $1$. Thus $1$ is not
	an eigenvalue of $\P$ when restricted to $\LTN$.

	As seen in Theorem 12.29 (b) of \cite{rudin},
	for every normal bounded operator $T$ on a Hilbert space, if $\l\in\C$ is not
	an eigenvalue of $T$, then $\SM_T(\{\l\})=0$. As $P$ is
	reversible with respect to $\pi$, $\P$ is self-adjoint and thus also normal.

	So applying the above theorem to $\P$, as $1\in\C$ is not an eigenvalue
	of $\P$, $\SM_{\P}(\{1\})=0$. Thus
	for every $f\in\LTN$, $\EM_{f,\P}(\{1\})=\lft f,\SM_{\P}(\{1\})f\rgt=
	\lft f,0\rgt=0$.
\end{proof}

We now show that if $P$ is aperiodic, then $-1$ cannot be an eigenvalue of
$\P$.

\begin{prop}\label{aperiodicity -1} Let $P$ be a Markov kernel reversible
	with respect to $\pi$. If $P$ is aperiodic then $-1$ is not an eigenvalue of
	$\P:\LTN\to\LTN$.
\end{prop}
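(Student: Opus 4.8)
The plan is to prove the contrapositive: if $-1$ is an eigenvalue of $\P$ on $\LTN$, then $P$ admits a periodic decomposition of period $2$. So suppose there is a nonzero $f\in\LTN$ with $\P f=-f$. The engine of the argument is that such an eigenfunction forces equality in the contraction inequality (\ref{ineq:1}). Indeed, since $\norm{\P f}=\norm{-f}=\norm{f}$, every inequality in the chain
$$\int_{\X}|\P f(x)|^2\,\pi(dx)\leq\iint_{\X\times\X}|f(y)|^2\,P(x,dy)\,\pi(dx)=\norm{f}^2$$
must be an equality. The first inequality is Jensen's inequality applied pointwise in $x$ and then integrated against $\pi$; since its integrand is nonnegative, equality of the two integrals forces the pointwise equality $\left(\int f(y)\,P(x,dy)\right)^2=\int f(y)^2\,P(x,dy)$ for $\pi$-a.e.\ $x$.

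Next I would extract the structural consequence. Equality in Jensen for the strictly convex map $t\mapsto t^2$ means that $f(y)$ has zero variance under $P(x,\cdot)$, hence is $P(x,\cdot)$-a.e.\ equal to its mean $\P f(x)=-f(x)$. Thus for $\pi$-a.e.\ $x$ we have $f(y)=-f(x)$ for $P(x,\cdot)$-a.e.\ $y$. Set $\XS_1:=\{x:f(x)>0\}$ and $\XS_2:=\{x:f(x)<0\}$; these are disjoint, and since a non-positive (resp.\ non-negative) function with zero $\pi$-integral must vanish $\pi$-a.e., the fact that $f\in\LTN$ is nonzero forces both $\pi(\XS_1)>0$ and $\pi(\XS_2)>0$. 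The a.e.\ identity then says precisely that from a.e.\ point of $\XS_1$ the chain moves into $\{f<0\}=\XS_2$ with probability one, and symmetrically from $\XS_2$ into $\XS_1$, which is a period-$2$ decomposition and contradicts aperiodicity.

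The main obstacle is measure-theoretic rather than conceptual: the definition of a periodic decomposition requires $P(x,\XS_2)=1$ for \emph{every} $x\in\XS_1$ (and symmetrically), whereas the Jensen argument delivers this only for $\pi$-a.e.\ $x$. To close this gap I would let $N$ be the $\pi$-null set of points where the pointwise identity fails and pass to its forward closure $\tilde N:=\{x:P^n(x,N)>0\text{ for some }n\geq0\}$. Each $\{x:P^n(x,N)>0\}$ is $\pi$-null because $\int P^n(x,N)\,\pi(dx)=\pi(N)=0$ by stationarity, so $\tilde N$ is null, and by construction $P(x,\tilde N)=0$ for every $x\notin\tilde N$. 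Redefining $\XS_1$ and $\XS_2$ by deleting $\tilde N$ leaves sets of the same positive measure on which $P(x,\{f<0\}\setminus\tilde N)=1$ and $P(x,\{f>0\}\setminus\tilde N)=1$ hold for every point, completing a genuine periodic decomposition. The remaining steps—the equality analysis of Jensen and the positivity of $\pi(\XS_1),\pi(\XS_2)$—are routine.
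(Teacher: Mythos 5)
Your proof is correct, and it takes a genuinely different route from the paper's. Both arguments prove the contrapositive and use the same decomposition $\XS_1=\{f>0\}$, $\XS_2=\{f<0\}$, with the same zero-integral argument giving $\pi(\XS_1),\pi(\XS_2)>0$; but the engines differ. The paper uses reversibility: self-adjointness gives $\int_{\XS_1}f\,d\pi=\int_{\X}f(y)P(y,\XS_2)\,\pi(dy)$, and a contradiction is then derived from the existence of a positive-measure set $E\subseteq\XS_1$ with $P(x,\XS_2)<1$ on $E$. You instead exploit equality in the contraction bound (\ref{ineq:1}): strict convexity of $t\mapsto t^2$ forces $f$ to be $P(x,\cdot)$-a.s.\ equal to its mean $\P f(x)=-f(x)$ for $\pi$-a.e.\ $x$. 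Your route buys two things. First, it never invokes reversibility—only stationarity—so it proves the proposition for arbitrary kernels with stationary measure $\pi$, whereas the paper's computation is tied to self-adjointness. Second, your handling of the null-set issue is more careful than the paper's own: the paper's definition of periodicity requires $P(x,\XS_2)=1$ for \emph{every} $x\in\XS_1$, yet its proof (like the first half of yours) only delivers this for $\pi$-a.e.\ $x$ and then declares $P$ periodic; your forward-closure set $\tilde{N}=\{x:P^n(x,N)>0\text{ for some }n\geq0\}$, which is $\pi$-null and satisfies $P(x,\tilde{N})=0$ for all $x\notin\tilde{N}$, closes exactly the gap the paper leaves implicit. (A further small point in your favour: the paper asserts that an eigenfunction must be nonzero $\pi$-a.e., which is not justified and is false in general—an eigenfunction with eigenvalue $-1$ can vanish on a set of positive $\pi$-measure—whereas your positivity argument uses only that $f$ is a nonzero element of $\LTN$ with zero mean.)
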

\begin{proof}
	We show the contrapositive. That is, we assume that $-1$ is an eigenvalue of $\P$,
	and show that $P$ is not aperiodic, i.e.\ periodic.

	Let $f\in\LTN$
	be an eigenfunction of $\P$ with eigenvalue $-1$. As $\P$ is self-adjoint
	(as $P$ is reversible),
	we can assume that $f$ is real-valued. Let
	$\XS_1=\{x\in\X:f(x)>0\}=f^{-1}((0,\infty))$ and
	$\XS_2=\{x\in\X:f(x)<0\}=f^{-1}((-\infty,0))$. As $f$ is
	$(\F,\B(\R))$-measureable where $\B(\R)$ is the Borel $\s$-field on $\R$,
	$\XS_1,\XS_2\in\F$.

	As $f$ is an eigenfunction, $f\neq0$ $\pi$-a.e. As $f\in\LTN$,
	$$0=\E_{\pi}(f)=\int_{\X}f(x)\pi(dx)=
	\int_{\XS_1}f(x)\pi(dx)+\int_{\XS_2}f(x)\pi(dx).$$
	So, the above combined with the fact that $f\neq0$ $\pi$-a.e.\ gives us that
	$\pi(\XS_1),\pi(\XS_2)>0$.

	So, as $\P f=-f$ for $\pi$-a.e.\ $x\in\X$ and
	$P$ is reversible with respect to $\pi$,
	$$\int_{\XS_1}f(x)\pi(dx)=-\int_{\XS_2}f(x)\pi(dx)=
	\int_{\XS_2}\P f(x)\pi(dx)=\int_{y\in\X}f(y)P(y,\XS_2)\pi(dy).$$
	Similarly, $\int_{\X}f(x)P(x,\XS_1)\pi(dx)=\int_{\XS_2}f(x)\pi(dx)$.

	We now claim that $P$ is periodic with respect to $\XS_1$
	and $\XS_2$. So assume for a contradiction there exists $E\in\F$ such that
	$\pi(E)>0$, $E\subseteq\XS_1$ and for every $x\in E$, $P(x,\XS_2)<1$.
	Then by definition of $\XS_2$,\\
	$\int_{\X}f(x)P(x,\XS_2)\pi(dx)\:=\:\int_{\XS_1}f(x)\pi(dx)
	\:>\:\int_{\XS_1}f(x)P(x,\XS_2)\pi(dx)\\
	\indent\geq\:\int_{\X}f(x)P(x,\XS_2)\pi(dx)$.\\
	So for $\pi$-a.e.\ $x\in\XS_1$, $P(x,\XS_2)=1$.
	Similarly, for $\pi$-a.e.\ $x\in\XS_2$, $P(x,\XS_1)=1$.

	Thus $P$ is periodic with period $d\geq2$.
\end{proof}

This proposition, combined with Theorem \ref{formula}, gives us a characterization
of $v(f,P)$ as sums of autocovariances, $\g_k$ (recall from Section \ref{subsect:mc prelims}),
when $P$ is aperiodic (see \cite{geyer}, \cite{green han}, \cite{jones}). Though this
characterization will not be used in this paper, it is perhaps more common and easier to
interpret from a statistical point of view.

\begin{prop}\label{autocovariances} If $P$ is an aperiodic $\varphi$-irreducible
	Markov kernel reversible with respect to $\pi$, then for every $f\in\LTN$,
	$$v(f,P)\:=\:\norm{f}^2+2\sum_{k=1}^{\infty}\lft f,\P^kf\rgt
	\:=\:\g_0+2\sum_{k=1}^{\infty}\g_k.$$
\end{prop}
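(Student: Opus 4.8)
The plan is to deduce this directly from Theorem \ref{formula} together with the vanishing of the spectral mass at the endpoints $\pm1$. First, the second equality is immediate from Section \ref{subsect:mc prelims}: for $f\in\LTN$ one has $\g_k=\lft f,\P^k f\rgt$, and in particular $\g_0=\lft f,f\rgt=\norm{f}^2$, so it suffices to prove the first equality, namely that the integral $\int_{[-1,1)}\frac{1+\l}{1-\l}\,\EM_{f,\P}(d\l)$ from Theorem \ref{formula} coincides with $\norm{f}^2+2\sum_{k=1}^{\infty}\lft f,\P^k f\rgt$. Using the spectral calculus $\lft f,\P^k f\rgt=\int_{\s(\P)}\l^k\,\EM_{f,\P}(d\l)$ and $\norm{f}^2=\int_{\s(\P)}\EM_{f,\P}(d\l)$, I would write the $n$-th partial sum as a single integral, $\norm{f}^2+2\sum_{k=1}^{n}\lft f,\P^k f\rgt=\int_{\s(\P)}S_n(\l)\,\EM_{f,\P}(d\l)$, where $S_n(\l):=1+2\sum_{k=1}^{n}\l^k$ is a bounded (polynomial) integrand, so every such integral is finite because $\EM_{f,\P}$ is a finite measure on $\s(\P)\subseteq[-1,1]$. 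The whole statement then reduces to passing to the limit $n\to\infty$ under the integral sign and identifying the pointwise limit $\lim_n S_n(\l)=1+\frac{2\l}{1-\l}=\frac{1+\l}{1-\l}$ for $\l\in(-1,1)$ via the geometric series.

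The decisive preliminary step, and the place where aperiodicity enters, is to restrict the domain of integration to $(-1,1)$. By Lemma \ref{E(1)=0} we already have $\EM_{f,\P}(\{1\})=0$. For the left endpoint, Proposition \ref{aperiodicity -1} gives that $-1$ is not an eigenvalue of $\P$ on $\LTN$, and hence, by the same result of Rudin invoked in Lemma \ref{E(1)=0} (Theorem 12.29(b)), $\SM_{\P}(\{-1\})=0$, so $\EM_{f,\P}(\{-1\})=0$. This is essential rather than cosmetic: at $\l=-1$ one has $S_n(-1)=1+2\sum_{k=1}^{n}(-1)^k$, which oscillates between $1$ and $-1$ and does not converge, so the series $\sum_k\g_k$ would fail to converge were $\EM_{f,\P}(\{-1\})$ positive. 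With both endpoint atoms removed, I can integrate over $(-1,1)$ throughout.

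To pass to the limit I would split $(-1,1)=[0,1)\cup(-1,0)$ and treat the two pieces by different convergence theorems, since $\frac{1+\l}{1-\l}$ is non-negative and possibly non-integrable on $[0,1)$ but bounded on $(-1,0)$. On $[0,1)$ every summand $\l^k$ is non-negative, so $S_n(\l)\ge0$ and $S_n(\l)$ increases to $\frac{1+\l}{1-\l}$; the Monotone Convergence Theorem then yields $\int_{[0,1)}S_n\,\EM_{f,\P}\to\int_{[0,1)}\frac{1+\l}{1-\l}\,\EM_{f,\P}$, with the common value allowed to be $+\infty$. On $(-1,0)$ I would use the closed form $S_n(\l)=\frac{1+\l}{1-\l}-\frac{2\l^{n+1}}{1-\l}$ to get the uniform bound $|S_n(\l)|\le\frac{1+\l}{1-\l}+\frac{2|\l|^{n+1}}{1-\l}\le3$ (using $0\le\frac{1+\l}{1-\l}\le1$ and $1-\l\ge1$), so the constant $3$ is an $\EM_{f,\P}$-integrable dominating function and the Dominated Convergence Theorem applies. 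Adding the two pieces, the first non-negative (possibly infinite) and the second finite so that no $\infty-\infty$ ambiguity arises, gives $\lim_n\int_{(-1,1)}S_n\,\EM_{f,\P}=\int_{(-1,1)}\frac{1+\l}{1-\l}\,\EM_{f,\P}$. The left-hand side is by construction $\norm{f}^2+2\sum_{k=1}^{\infty}\lft f,\P^k f\rgt$, and the right-hand side equals $\int_{[-1,1)}\frac{1+\l}{1-\l}\,\EM_{f,\P}=v(f,P)$ by Theorem \ref{formula} (the atoms at $\{-1\}$ and $\{1\}$ vanishing), completing the proof.

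The main obstacle is the interchange of the infinite sum with the integral while permitting $v(f,P)=\infty$: a single dominated-convergence argument is unavailable because the limit integrand $\frac{1+\l}{1-\l}$ need not be integrable on $[0,1)$, which forces the split into a monotone-convergence piece on $[0,1)$ and a dominated-convergence piece on $(-1,0)$. The only genuinely new input beyond Theorem \ref{formula} is the vanishing of the atom at $-1$ supplied by aperiodicity, which is precisely what guarantees convergence, rather than mere oscillation, of the autocovariance series.
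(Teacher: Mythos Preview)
Your proof is correct and follows essentially the same route as the paper: use Proposition \ref{aperiodicity -1} together with Rudin's Theorem 12.29(b) to kill the atom at $-1$, then interchange the series and the spectral integral by splitting $(-1,1)$ into a monotone-convergence piece on the non-negative side and a dominated-convergence piece on the negative side. The only cosmetic differences are that the paper separates off $\norm{f}^2$ first and works with $\sum_{k\ge1}\l^k=\frac{\l}{1-\l}$ rather than your $S_n(\l)$, and places the point $0$ in the DCT piece instead of the MCT piece; your explicit dominating constant $3$ and your remark on the oscillation of $S_n(-1)$ are in fact a bit more detailed than what the paper writes.
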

\begin{rem} Even if $P$ is periodic, Proposition \ref{autocovariances} is still true
	for all $f\in\LTN$ that are perpendicular to the eigenfunctions of $\P$ with
	eigenvalue $-1$. (This ensures $\EM_{f,\P}(\{-1\})=0$).
\end{rem}
\begin{proof}
	Let $f\in\LTN$. By Proposition \ref{aperiodicity -1}, $-1$ is not an
	eigenvalue of $\P$. So, just as in the proof of Lemma \ref{E(1)=0}, again by
	Theorem 12.29 (b) of \cite{rudin}, as $\P$ is self-adjoint and thus normal,
	$\EM_{f,\P}(\{-1\})=0$.
	So by Theorem \ref{formula}, $v(f,P)=\int_{\l\in[-1,1)}\frac{1+\l}{1-\l}
	\EM_{f,\P}(d\l)=\norm{f}^2+2\int_{\l\in(-1,1)}\frac{\l}{1-\l}\EM_{f,\P}(d\l)$.
	
	Recalling the geometric series $\sum_{k=1}^{\infty}\l^k=\frac{\l}{\l-\l}$ for
	$\l\in(-1,1)$, by the Monotone Convergence Theorem for $\l\in(0,1)$ and by the
	Dominated Convergenve Theorem for $\l\in(-1,0]$, we have
	$$\int_{\l\in(-1,1)}\frac{\l}{1-\l}\EM_{f,\P}(d\l)
	\,=\,\int_{\l\in(-1,1)}\sum_{k=1}^{\infty}\l^k\EM_{f,\P}(d\l)
	\,=\,\sum_{k=1}^{\infty}\int_{\l\in(-1,1)}\l^k\EM_{f,\P}(d\l).$$

	So the asymptotic variance becomes $v(f,P)=\norm{f}^2+2\int_{\l\in(-1,1)}
	\frac{\l}{1-\l}\EM_{f,\P}(d\l)=\norm{f}^2+2\sum_{k=1}^{\infty}
	\int_{\l\in(-1,1)}\l^k\EM_{f,\P}(d\l)=\norm{f}^2+2\sum_{k=1}^{\infty}
	\lft f,\P^kf\rgt=\g_0+2\sum_{k=1}^{\infty}\g_k$, as $\E_{\pi}(f)=0$.
\end{proof}

\section{Efficiency Dominance Equivalence}\label{sect:equivs}

In this section, we use some basic functional analysis to prove our most useful
equivalent condition for efficiency dominance for $\varphi$-irreducible
reversible Markov kernels, simplifying the proof and staying away from overly technical
arguments. We then use this equivalent condition to show how
reversible antithetic Markov kernels are more efficient than i.i.d.\ sampling, and
show that efficiency dominance is a partial ordering on $\varphi$-irreducible
reversible kernels.

We state the equivalent condition theorem
here, and then introduce the lemmas we need from functional analysis before we
prove each direction of the equivalence. We prove the lemmas in Section
\ref{sect:lemma}.

\begin{thm}\label{equiv} If $P$ and $Q$ are $\varphi$-irreducible
	Markov kernels reversible with respect to $\pi$,
	then $P$ efficiency dominates $Q$ if and only if for every
	$f\in\LTN$,
	$$\lft f,\P f\rgt\leq\lft f,\Q f\rgt.$$
\end{thm}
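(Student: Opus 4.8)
The plan is to prove each direction of the equivalence by working with the spectral representation of $v(f,P)$ and $v(f,Q)$ furnished by Theorem \ref{formula}, and connecting the quantities $\lft f,\P f\rgt$ and $\lft f,\Q f\rgt$ to these asymptotic variances. Throughout I work on $\LTN$, which is legitimate since efficiency dominance on $\LT$ is equivalent to efficiency dominance on $\LTN$ (as noted in Section \ref{sect:prelims}).

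The forward direction ($P$ efficiency dominates $Q$ $\implies$ covariance dominance) I expect to be the harder one, so let me sketch the reverse direction first. Suppose $\lft f,\P f\rgt\leq\lft f,\Q f\rgt$ for every $f\in\LTN$. The naive hope is to compare $v(f,P)=\int_{[-1,1)}\frac{1+\l}{1-\l}\,\EM_{f,\P}(d\l)$ with the analogous formula for $Q$ termwise, but the spectral measures $\EM_{f,\P}$ and $\EM_{f,\Q}$ live on different operators, so I cannot integrate against a common measure directly. Instead I would use the autocovariance-type expansion: since $\lft f,\P^k f\rgt=\int \l^k\,\EM_{f,\P}(d\l)$, the hypothesis controls the $k=1$ term, and the plan is to bootstrap this to control $\lft f,\P^k f\rgt$ for all $k$ by applying the hypothesis to cleverly chosen functions (e.g.\ replacing $f$ by $\P^{j}f$ or by resolvent-type combinations $(\id-s\P)^{-1}f$), then reassemble the finite-$N$ variance $\frac1N\V(\sum_{k=1}^N f(X_k))=\norm{f}^2+2\sum_{k=1}^N\frac{N-k}{N}\lft f,\P^k f\rgt$ and pass to the limit. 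A cleaner route, which I would try first, is to write $\frac{1+\l}{1-\l}$ as a limit of polynomials in $\l$ with \emph{nonnegative} coefficients when expanded appropriately, since $\frac{1+\l}{1-\l}=1+2\sum_{k\geq1}\l^k$; then $v(f,P)$ is a limit of expressions $\lft f,\phi_n(\P)f\rgt$ where each $\phi_n$ has nonnegative power-series coefficients, and I would argue that $\lft f,\P f\rgt\leq\lft f,\Q f\rgt$ propagates to $\lft f,\phi_n(\P)f\rgt\leq\lft f,\phi_n(\Q)f\rgt$. This last propagation is exactly the content I expect to need a functional-analytic lemma for, presumably one of the lemmas deferred to Section \ref{sect:lemma}.

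For the forward direction, I would prove the contrapositive: assume covariance dominance fails, so there exists $g\in\LTN$ with $\lft g,\P g\rgt>\lft g,\Q g\rgt$, and produce some $f\in\LTN$ with $v(f,P)>v(f,Q)$. The idea is that if $P$ and $Q$ agreed in asymptotic variance on every $f$ but $\P$ and $\Q$ differed at first order on $g$, we could expose the discrepancy by looking at short-time behaviour. Concretely, I would exploit the finite-$N$ identity above: for the single-step comparison one has $\frac{1}{2}(\lft f,\Q f\rgt-\lft f,\P f\rgt)$ appearing as the leading correction, so the plan is to recover $\lft f,(\Q-\P)f\rgt$ as a limit of differences of truncated variances (differentiating the variance functional at $N$-small scales), and conclude that if $v(\cdot,P)\leq v(\cdot,Q)$ uniformly then the first-order term must satisfy the same inequality. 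Care is required because the asymptotic variance only sees the $N\to\infty$ limit and may be insensitive to finite-order coefficients if divergences intervene, so I would restrict attention to $f$ in spectral subspaces where $v(f,\cdot)$ is finite (e.g.\ $f$ in the range of $\EM_{\P}([-1+\e,1-\e])$) to keep all sums convergent, then remove the truncation.

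The \textbf{main obstacle} I anticipate is the forward direction: extracting the \emph{first-order} (single-step) inequality $\lft f,\P f\rgt\leq\lft f,\Q f\rgt$ from the \emph{infinite-horizon} quantity $v(f,P)\leq v(f,Q)$. The difficulty is that $v$ mixes all autocovariances with the weights $\frac{1+\l}{1-\l}$, which blow up near $\l=1$, so one cannot simply read off the $k=1$ coefficient; some regularization or a limiting argument isolating the short-time contribution is essential, and this is where I expect the spectral lemmas from Section \ref{sect:lemma} (relating positivity of $\Q-\P$ to the comparison of induced spectral measures) to do the real work.
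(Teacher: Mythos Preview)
Your plan for the ``if'' direction has a genuine gap. The propagation step you propose---that $\lft f,\P f\rgt\leq\lft f,\Q f\rgt$ for all $f$ should imply $\lft f,\phi_n(\P)f\rgt\leq\lft f,\phi_n(\Q)f\rgt$ whenever $\phi_n$ has nonnegative power-series coefficients---is \emph{false}. The map $\l\mapsto\l^k$ is not operator monotone for $k\geq2$: there exist self-adjoint contractions $\P\leq\Q$ (in the Loewner order) with $\P^2\not\leq\Q^2$. So no lemma in Section \ref{sect:lemma} will deliver this, because the statement itself fails. Your passing mention of ``resolvent-type combinations $(\id-s\P)^{-1}f$'' is in fact the right thread: the function $\l\mapsto(1-\eta\l)^{-1}$ \emph{is} operator monotone on $[-1,1]$ for $\eta\in[0,1)$, and this is exactly the content of Lemma \ref{lemma 6} (the operator antitonicity of $t\mapsto t^{-1}$ on $(0,\infty)$). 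The paper's argument sets $T_{\P,\eta}=\id-\eta\P$, applies Lemma \ref{lemma 6} to get $T_{\P,\eta}^{-1}\leq T_{\Q,\eta}^{-1}$, rewrites this as $\lft f,\P(\id-\eta\P)^{-1}f\rgt\leq\lft f,\Q(\id-\eta\Q)^{-1}f\rgt$, and only then lets $\eta\to1^-$ via MCT/DCT to recover $v(f,P)\leq v(f,Q)$. You should abandon the power-series route and develop the resolvent route.

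For the ``only if'' direction your sketch is too vague to succeed, and the specific mechanism you suggest---differentiating truncated variances at small $N$ to isolate the $k=1$ autocovariance---does not work, for the reason you yourself flag: $v(f,P)$ is a pure $N\to\infty$ limit and carries no finite-$N$ information. Restricting to spectral bands $[-1+\e,1-\e]$ of $\P$ does not help either, because such bands are tied to $\P$ and say nothing about the $\Q$-spectral decomposition of the same $f$. The paper's argument is structurally different: it first identifies $\{f:v(f,P)<\infty\}=\domain\bigl((\id-\P)^{-1/2}\bigr)$ (Lemma \ref{space of fin var}), so that efficiency dominance yields both a domain inclusion $\domain\bigl((\id-\Q)^{-1/2}\bigr)\subseteq\domain\bigl((\id-\P)^{-1/2}\bigr)$ and the norm inequality $\norm{(\id-\P)^{-1/2}f}\leq\norm{(\id-\Q)^{-1/2}f}$ on that domain; it then invokes Lemma \ref{lemma 6.5}, a variational formula $\lft f,Tf\rgt=\sup_{g}\bigl[2\operatorname{Re}\lft g,f\rgt-\norm{T^{-1/2}g}^2\bigr]$, to conclude $\id-\Q\leq\id-\P$. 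The key idea you are missing is that the comparison must be routed through the (possibly unbounded) operators $(\id-\P)^{-1/2}$ and $(\id-\Q)^{-1/2}$, not through finite-$N$ truncations.
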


To prove the ``if'' direction of Theorem \ref{equiv}, we follow an approach
similar to the one seen in \cite{mira geyer}, but with some notable differences.
The most important difference, is the use of the
following lemma instead of some overly technical results from \cite{bendat sherman}.
The following lemma, Lemma \ref{lemma 6}, is a generalisation of some results
found in Chapter V of \cite{bhatia} from finite dimensional vector spaces to general
Hilbert spaces. The finite dimensional version of Lemma \ref{lemma 6} is also
presented in \cite{finite} as Lemma 24.

\begin{lem}\label{lemma 6} If $T$ and $N$ are
	self-adjoint bounded linear operators on a Hilbert
	space $\H$, such that $\s(T),\s(N)\subseteq(0,\infty)$,
	then $\lft f,Tf\rgt\leq\lft f,Nf\rgt$ for every $f\in\H$,
	if and only if $\lft f,T^{-1}f\rgt\geq\lft f,N^{-1}f\rgt$,
	for ever $f\in\H$.
\end{lem}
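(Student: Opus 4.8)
The plan is to recognise this as the statement that operator inversion is antitone for the Löwner (positive-operator) order, and to prove it by a conjugation that reduces the comparison to a comparison with the identity, where the scalar inequality $0<x\le 1\Rightarrow x^{-1}\ge 1$ can be lifted through the functional calculus of Section~\ref{subsect:fa prelims}.

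First I would record that the hypotheses make both operators invertible with bounded inverse: since $T$ and $N$ are self-adjoint with spectra that are compact subsets of $(0,\infty)$, each spectrum is bounded away from $0$, so $T,N$ are bounded below and invertible, and $\s(T^{-1})=\{\l^{-1}:\l\in\s(T)\}\subseteq(0,\infty)$, and likewise for $N^{-1}$. In particular the pairs $(T,N)$ and $(N^{-1},T^{-1})$ satisfy identical hypotheses and are interchanged by inversion, so the two implications of the lemma are equivalent; it therefore suffices to prove that $\lft f,Tf\rgt\le\lft f,Nf\rgt$ for every $f$ forces $\lft f,N^{-1}f\rgt\le\lft f,T^{-1}f\rgt$ for every $f$.

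Next, using the functional calculus I would form $N^{1/2}$ and $N^{-1/2}$ via $\phi(\l)=\sqrt{\l}$ and $\phi(\l)=\l^{-1/2}$, both bounded and continuous on $\s(N)\subseteq[c,\norm{N}]$ for some $c>0$; these are bounded, self-adjoint, mutually inverse, and commute with $N$. Setting $S:=N^{-1/2}TN^{-1/2}$, the substitution $f=N^{-1/2}g$ turns $\lft f,Tf\rgt\le\lft f,Nf\rgt$ into $\lft g,Sg\rgt\le\lft g,g\rgt$ for all $g$ (using $N^{-1/2}NN^{-1/2}=\id$). Moreover $S$ is self-adjoint and, since $T$ is bounded below by a positive constant, so is $S$, whence $\s(S)\subseteq(0,1]$. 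Applying the functional calculus once more with $\psi(\l)=\l^{-1}-1$, which is nonnegative and bounded on $\s(S)$, gives $\lft g,(S^{-1}-\id)g\rgt=\int_{\s(S)}\psi(\l)\,\EM_{g,S}(d\l)\ge 0$, i.e.\ $\lft g,S^{-1}g\rgt\ge\lft g,g\rgt$ for all $g$. (Equivalently one may invoke Lemma~\ref{pos eigenvalues}, since $\s(S^{-1})\subseteq[1,\infty)$.) Finally I would undo the conjugation: because $S^{-1}=N^{1/2}T^{-1}N^{1/2}$, the substitution $g=N^{-1/2}h$ in $\lft g,S^{-1}g\rgt\ge\lft g,g\rgt$ yields exactly $\lft h,T^{-1}h\rgt\ge\lft h,N^{-1}h\rgt$ for every $h$, which is the desired conclusion.

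I expect the main obstacle to be the infinite-dimensional functional-analytic bookkeeping rather than any combinatorial subtlety: one must justify that $N^{\pm1/2}$ exist as bounded operators obeying the expected algebraic identities, that $S$ is genuinely invertible with $\s(S)$ bounded away from $0$ (so that $\psi$ is bounded on $\s(S)$ and $S^{-1}$ is a bounded operator), and that the conjugating maps $N^{\pm1/2}$ are bijections of $\H$, so that the substitutions $f=N^{-1/2}g$ and $g=N^{-1/2}h$ genuinely range over all of $\H$. Each of these follows from the spectral theorem together with the fact that the spectra lie in a compact subinterval of $(0,\infty)$, but they are precisely the steps that replace the finite-dimensional diagonalisation used in \cite{bhatia} and in Lemma~24 of \cite{finite}.
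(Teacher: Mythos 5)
Your proposal is correct and follows essentially the same route as the paper's proof: conjugate by $N^{-1/2}$ to reduce the hypothesis to $\s\!\left(N^{-1/2}TN^{-1/2}\right)\subseteq(0,1]$, invert to get spectrum in $[1,\infty)$, undo the conjugation via $\left(N^{-1/2}TN^{-1/2}\right)^{-1}=N^{1/2}T^{-1}N^{1/2}$, and obtain the converse by swapping $(T,N)$ with $(N^{-1},T^{-1})$. The only differences are presentational: you phrase the conjugation as a substitution $f=N^{-1/2}g$ and the spectral step via the functional calculus with $\psi(\l)=\l^{-1}-1$, where the paper packages the same computations as Lemma~\ref{add adjoint} and Lemmas~\ref{delt}/\ref{pos eigenvalues}.
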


Lemma \ref{lemma 6} is proven in Section \ref{subsect:lemma 6}, where we discuss
the differences in Lemma \ref{lemma 6} between finite dimensional
(as shown in \cite{finite})
and general Hilbert spaces.

We now prove the ``if'' direction of Theorem \ref{equiv} with the help of
Lemma \ref{lemma 6}.

\bigskip
\noindent
\textit{Proof of ``if'' Direction of Theorem \ref{equiv}.}

	Say $\lft f,\P f\rgt\leq\lft f,\Q f\rgt$ for every $f\in\LTN$.
	For every $\eta\in[0,1)$, let $T_{\P,\eta}=\id-\eta\P$ and
	$T_{\Q,\eta}=\id-\eta\Q$. Then as $\norm{\P},\norm{\Q}\leq1$,
	by the Cauchy-Schwartz inequality, for every $f\in\LTN$, 
	$\left|\lft f,\P f\rgt\right|\leq\norm{f}^2$ and
	$\left|\lft f,\Q f\rgt\right|\leq\norm{f}^2$. So again by
	the Cauchy-Schwartz inequality, for every $f\in\LTN$,
	\begin{align*}
		\norm{T_{\P,\eta}f}\norm{f}
		&\geq\left|\lft T_{\P,\eta}f,f\rgt\right|\\
		&=\left|\norm{f}^2-\eta\lft f,\P f\rgt\right|\\
		&\geq\left|1-\eta\right|\norm{f}^2.
	\end{align*}
	Thus for every $f\in\LTN$, $\norm{T_{\P,\eta}f},\norm{T_{\Q,\eta}f}\geq\left|1-\eta\right|
	\norm{f}$. As $\eta \in[0,1)$, $\left|1-\eta\right|>0$, and as $T_{\P,\eta}$
	and $T_{\Q,\eta}$ are both normal (as they are self-adjoint)
	$T_{\P,\eta}$ and $T_{\Q,\eta}$ are both invertible, in the sense
	of bounded linear operators, i.e.\ the inverse of $T_{\P,\eta}$ and
	$T_{\Q,\eta}$ are bounded, and
	$0\notin\s(T_{\P,\eta}),\s(T_{\Q,\eta})$ (by Lemma \ref{delt}).

	As $\norm{\P},\norm{\Q}\leq1$ and $\P$ and $\Q$ are self-adjoint (as
	$P$ and $Q$ are reversible),
	$\s(\P),\s(\Q)\subseteq[-1,1]$. Thus for every
	$\eta\in[0,1)$, $\s(T_{\P,\eta}),\s(T_{\Q,\eta})\subseteq(0,2)
	\subseteq(0,\infty)$.

	So for every $\eta\in[0,1)$, as $T_{\P,\eta}$ and $T_{\Q,\eta}$
	are both self-adjoint, and for every $f\in\LTN$,
	$\lft f,T_{\Q,\eta}f\rgt=\norm{f}^2-\eta\lft f,\Q f\rgt
	\leq\norm{f}^2-\eta\lft f,\P f\rgt=\lft f,T_{\P,\eta}f\rgt$,
	by Lemma \ref{lemma 6}, for every $f\in\LTN$,
	$\lft f,T^{-1}_{\Q,\eta}f\rgt\geq\lft f,T^{-1}_{\P,\eta}f\rgt$.

	Notice that for every $\eta\in[0,1)$,
	$T^{-1}_{\P,\eta}=\left(\id-\eta\P\right)\left(\id-\eta\P\right)^{-1}
	+\eta\P\left(\id-\eta\P\right)^{-1}=\id+\eta\P\left(\id-\eta\P\right)^{-1}$.
	So, for every $f\in\LTN$,
	\begin{align*}
		\norm{f}^2+\eta\lft f,\P\left(\id-\eta\P\right)^{-1}f\rgt
		&=\lft f,T^{-1}_{\P,\eta}f\rgt\\
		&\leq\lft f,T^{-1}_{\Q,\eta}f\rgt
		=\norm{f}^2+\eta\lft f,\Q\left(\id-\eta\Q\right)^{-1}f\rgt,
	\end{align*}
	so for every $f\in\LTN$, $\lft f,\P\left(\id-\eta\P\right)^{-1}f\rgt\leq
	\lft f,\Q\left(\id-\eta\Q\right)^{-1}f\rgt$.

	Thus by the Monotone Convergence Theorem for $\l\in(0,1)$ and the
	Dominated Convergence Theorem for $\l\in[-1,0]$, for every $f\in\LTN$,\\
	$\lim_{\eta\to1^-}\int_{[-1,1)}\frac{\l}{1-\eta\l}\EM_{f,\P}(d\l)=
	\int_{[-1,1)}\frac{\l}{1-\l}\EM_{f,\P}(d\l)$, and similarly for $\Q$.

	As $P$ and $Q$ are $\varphi$-irreducible and reversible with respect
	to $\pi$, by Theorem \ref{formula}, for every $f\in\LTN$,
	\begin{align*}
		&v(f,P)
		\;=\;\int_{[-1,1)}\frac{1+\l}{1-\l}\EM_{f,\P}(d\l)
		\;=\;\norm{f}^2+2\int_{[-1,1)}\frac{\l}{1-\l}\EM_{f,\P}(d\l)\\
		&\;=\;\norm{f}^2+2\lim_{\eta\to1^-}\int_{[-1,1)}\frac{\l}{1-\eta\l}
			\EM_{f,\P}(d\l)
		\;=\;\norm{f}^2+2\lim_{\eta\to1^-}\lft f,\P\left(\id-\eta\P\right)^{-1}f\rgt\\
		&\;\leq\;\norm{f}^2+2\lim_{\eta\to1^-}\lft f,\Q\left(\id-\eta\Q\right)^{-1}f\rgt
		\;=\;\norm{f}^2+2\lim_{\eta\to1^-}\int_{[-1,1)}\frac{\l}{1-\eta\l}
			\EM_{f,\Q}(d\l)\\
		&\;=\;\norm{f}^2+2\int_{[-1,1)}\frac{\l}{1-\l}\EM_{f,\Q}(d\l)
		\;=\;\int_{[-1,1)}\frac{1+\l}{1-\l}\EM_{f,\Q}(d\l)
		\;=\;v(f,Q).
	\end{align*}
	So as $v(f,P)\leq v(f,Q)$ for every $f\in\LTN$, $v(f,P)\leq v(f,Q)$ for every
	$f\in\LT$ (see Section \ref{subsect:mc prelims}), thus $P$ efficiency
	dominates $Q$.
\qed

\begin{rem} For the other direction of Theorem \ref{equiv}, it is hard to make use of
	any arguments utilising Lemma \ref{lemma 6}. If $P$ effieciency dominates $Q$,
	we are given that each \emph{individual} $f\in\LTN$ satisfies
	$\lim_{\eta\to1^-}\lft f,T_{\P,\eta}^{-1}f\rgt\leq
	\lim_{\eta\to1^-}\lft f,T_{\Q,\eta}^{-1}f\rgt$. As we can't apply Lemma
	\ref{lemma 6} to the limits above, it seems the most we can do is fix an $\e>0$,
	and by the above limit for every $f\in\LTN$ there exists $\eta_f\in[0,1)$ such
	that for every $\eta_f\leq\eta<1$, $\lft f,T_{\P,\eta}^{-1}f\rgt\leq
	\lft f,T_{\Q,\eta}^{-1}f\rgt+\e\norm{f}^2=\lft f,\left(T^{-1}_{\Q,\eta_f}+\e\id\right)
	f\rgt$. However, this results in a possible different $\eta_f$ for every $f\in\LTN$,
	and it's not obvious that $\sup\{\eta_f:f\in\LTN\}<1$, leaving us with no
	single $\eta\in[0,1)$ such that the above inequality holds for every
	$f\in\LTN$ to allow us to apply Lemma \ref{lemma 6}.
\end{rem}

Due to the difficulties in applying Lemma \ref{lemma 6} in the only if direction
of Theorem \ref{equiv}, we make use of the following lemma, which appears as
Corollary 3.1 from \cite{mira geyer}.

\begin{lem}\label{lemma 6.5} Let $T$ and $N$ be injective self-adjoint positive
	bounded linear operators on the Hilbert space $\H$ (though $T^{-1/2}$ and
	$N^{-1/2}$ may be unbounded).
	If $\domain(T^{-1/2})\subseteq\domain(N^{-1/2})$
	\emph{and} for every $f\in\domain(T^{-1/2})$ we have
	$\norm{N^{-1/2}f}\leq\norm{T^{-1/2}f}$,
	then $\lft f,Tf\rgt\leq\lft f,Nf\rgt$ for every $f\in\H$.
\end{lem}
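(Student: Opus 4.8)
The plan is to reduce the desired operator inequality to its square-root form and then transport the hypothesis through an auxiliary contraction built from the inverse square roots. Since $T$ and $N$ are positive, self-adjoint, and bounded, the functional calculus provides positive self-adjoint bounded square roots $T^{1/2}$ and $N^{1/2}$; injectivity of $T$ and $N$ forces injectivity of the roots, so the inverses $T^{-1/2}$ and $N^{-1/2}$ are densely defined self-adjoint (possibly unbounded) operators with $\domain(T^{-1/2})=\range(T^{1/2})$ and $\domain(N^{-1/2})=\range(N^{1/2})$ (density comes from $\overline{\range(T^{1/2})}=\nul(T^{1/2})^{\perp}=\H$, using self-adjointness, and likewise for $N$). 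Writing $\lft f,Tf\rgt=\norm{T^{1/2}f}^2$ and $\lft f,Nf\rgt=\norm{N^{1/2}f}^2$, the target inequality is equivalent to $\norm{T^{1/2}f}\leq\norm{N^{1/2}f}$ for every $f\in\H$, so it suffices to prove the latter.

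Next I would introduce the operator $S:=N^{-1/2}T^{1/2}$ and show it is everywhere defined with $\norm{S}\leq1$. For any $h\in\H$ we have $T^{1/2}h\in\range(T^{1/2})=\domain(T^{-1/2})\subseteq\domain(N^{-1/2})$, so $Sh$ is defined; setting $f:=T^{1/2}h$ gives $T^{-1/2}f=h$, and the hypothesis yields $\norm{Sh}=\norm{N^{-1/2}f}\leq\norm{T^{-1/2}f}=\norm{h}$. Hence $S$ extends to a bounded operator on all of $\H$ with norm at most $1$.

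Then I would pass to the adjoint. Since $S$ is bounded and everywhere defined, $S^{*}$ is bounded with $\norm{S^{*}}=\norm{S}\leq1$, and I would identify it on the dense domain $\domain(N^{-1/2})$. For $u\in\H$ and $v\in\domain(N^{-1/2})$, the self-adjointness of $N^{-1/2}$ lets me move it across the inner product (both $T^{1/2}u$ and $v$ lie in $\domain(N^{-1/2})$), and the boundedness and self-adjointness of $T^{1/2}$ finish the computation, giving $\lft Su,v\rgt=\lft u,T^{1/2}N^{-1/2}v\rgt$; thus $S^{*}v=T^{1/2}N^{-1/2}v$ for every $v\in\domain(N^{-1/2})$, so that $\norm{T^{1/2}N^{-1/2}v}\leq\norm{v}$ on this domain.

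Finally, taking $v:=N^{1/2}h$ (so that $v\in\range(N^{1/2})=\domain(N^{-1/2})$ and $N^{-1/2}v=h$) gives $\norm{T^{1/2}h}=\norm{T^{1/2}N^{-1/2}v}\leq\norm{v}=\norm{N^{1/2}h}$ for every $h\in\H$, which is exactly the reduced inequality and hence proves the lemma. I expect the third step to be the main obstacle: justifying the adjoint identity $S^{*}=\overline{T^{1/2}N^{-1/2}}$ demands careful domain bookkeeping for the unbounded operators and a correct use of the self-adjointness of $N^{-1/2}$ (namely that $\domain(N^{-1/2})=\domain\bigl((N^{-1/2})^{*}\bigr)$ with equal action); everything else reduces to routine functional calculus.
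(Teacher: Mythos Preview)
Your argument is correct and takes a genuinely different route from the paper. The paper proves the lemma by first establishing (as a separate lemma, following Mira and Geyer) the variational identity
\[
\lft f,Tf\rgt=\sup_{g\in\domain(T^{-1/2})}\Bigl[\lft g,f\rgt+\lft f,g\rgt-\norm{T^{-1/2}g}^2\Bigr],
\]
which is a Legendre-type duality formula; the hypothesis then makes the supremum for $T$ visibly no larger than that for $N$ (smaller index set, larger subtracted term). Your approach instead builds the contraction $S=N^{-1/2}T^{1/2}$ directly from the hypothesis, and then exploits $\norm{S^{*}}=\norm{S}\leq1$ together with the identification $S^{*}|_{\domain(N^{-1/2})}=T^{1/2}N^{-1/2}$ to flip the inequality to the square roots. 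The paper's method isolates a reusable duality lemma and keeps the final step to two lines, while your method is more direct and purely operator-theoretic, trading the variational detour for the adjoint computation you flagged; that computation is sound since both $T^{1/2}u$ and $v$ lie in $\domain(N^{-1/2})$, so self-adjointness of $N^{-1/2}$ legitimately moves it across the pairing.
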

See Section \ref{subsect:lemma 6.5} for the proof of Lemma \ref{lemma 6.5}.

We must also make use of the fact that the space of functions with finite asymptotic
variance is the domain of $\left(\id-\P\right)^{-1/2}$. This was first stated in
\cite{kipnis varadhan}, using a test function argument. We take a different approach
and provide a proof using the ideas of the Spectral Theorem. In particular, it
uses some ideas from the proof of Theorem X.4.7 from \cite{conway}.

\begin{lem}\label{space of fin var} If $P$ is a $\varphi$-irreducible
	Markov kernel reversible with respect to $\pi$, then
	$$\left\{f\in\LTN:v(f,P)<\infty\right\}=
	\domain\left(\left(\id-\P\right)^{-1/2}\right).$$
\end{lem}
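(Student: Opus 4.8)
The plan is to characterize the set $\{f\in\LTN:v(f,P)<\infty\}$ using the spectral formula from Theorem \ref{formula}, and then identify the right-hand side, $\domain((\id-\P)^{-1/2})$, using the spectral calculus for (possibly unbounded) functions of the self-adjoint operator $\P$. The key observation is that both sides can be rewritten as integrability conditions against the induced measure $\EM_{f,\P}$. By Theorem \ref{formula}, $v(f,P)=\int_{[-1,1)}\frac{1+\l}{1-\l}\,\EM_{f,\P}(d\l)$, so $v(f,P)<\infty$ precisely when $\l\mapsto\frac{1+\l}{1-\l}$ is $\EM_{f,\P}$-integrable over $[-1,1)$. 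Since Lemma \ref{E(1)=0} gives $\EM_{f,\P}(\{1\})=0$, we may freely work on $[-1,1)$ and ignore the point $1$.

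First I would make the spectral-calculus definition of $\domain((\id-\P)^{-1/2})$ precise. For an unbounded Borel function $\phi$, the operator $\phi(\id-\P)$ defined via the spectral measure $\SM_\P$ has domain exactly those $f$ for which $\int|\phi(1-\l)|^2\,\EM_{f,\P}(d\l)<\infty$; this is the standard description of the domain of a function of a self-adjoint operator, which is the content imported from Theorem X.4.7 of \cite{conway}. Here $\phi(t)=t^{-1/2}$, so $\phi(1-\l)=(1-\l)^{-1/2}$, and hence
\begin{equation*}
	f\in\domain\left(\left(\id-\P\right)^{-1/2}\right)
	\iff\int_{[-1,1)}\frac{1}{1-\l}\,\EM_{f,\P}(d\l)<\infty.
\end{equation*}
The remaining task is purely analytic: to show that finiteness of $\int_{[-1,1)}\frac{1+\l}{1-\l}\,\EM_{f,\P}(d\l)$ is equivalent to finiteness of $\int_{[-1,1)}\frac{1}{1-\l}\,\EM_{f,\P}(d\l)$. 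This follows from comparing the two integrands on $[-1,1)$: since $\EM_{f,\P}$ is a finite measure (it is bounded by $\norm{f}^2$), the difference $\frac{1+\l}{1-\l}-\frac{2}{1-\l}=\frac{\l-1}{1-\l}=-1$ is bounded, so the two integrals differ by a finite constant, and the factor of $2$ is harmless. Thus both integrability conditions hold or fail together, and the two sets coincide.

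The main obstacle I expect is justifying the spectral-calculus description of $\domain((\id-\P)^{-1/2})$ rigorously, since $(\id-\P)^{-1/2}$ is genuinely unbounded when $1\in\s(\P)$ (which is permitted for $\varphi$-irreducible kernels, as noted at the end of Section \ref{subsect:fa prelims}). One must confirm that $\id-\P$ is an injective positive operator on $\LTN$ so that $(\id-\P)^{-1/2}$ is well-defined as an unbounded self-adjoint operator; injectivity is where $\varphi$-irreducibility enters, via $\EM_{f,\P}(\{1\})=0$ from Lemma \ref{E(1)=0}, ensuring the spectral mass of $\id-\P$ at $0$ vanishes. Once the domain is correctly identified through the spectral measure, the equivalence of the two integrability conditions is the routine comparison described above, so the bulk of the care goes into setting up the functional calculus for the unbounded inverse square root rather than into the final estimate.
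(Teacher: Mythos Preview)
Your proposal is correct and follows essentially the same approach as the paper: both reduce the question via Theorem \ref{formula} and Lemma \ref{E(1)=0} to the equivalence of finiteness of $\int_{[-1,1)}\frac{1+\l}{1-\l}\,\EM_{f,\P}(d\l)$ and $\int_{[-1,1)}\frac{1}{1-\l}\,\EM_{f,\P}(d\l)$, and then identify the latter with the spectral-calculus domain condition for $(\id-\P)^{-1/2}$. The only difference is one of detail: where you invoke the standard domain characterization $\domain(\phi(\P))=\{f:\int|\phi|^2\,d\EM_{f,\P}<\infty\}$ directly, the paper reproves it explicitly by truncating $\phi_n=\I_{\{|\phi|<n\}}\phi$ and applying monotone convergence to obtain $\int|\phi|^2\,d\EM_{f,\P}=\norm{(\id-\P)^{-1/2}f}^2$.
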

\begin{proof}
	Let $\phi:[-1,1]\to\R$ such that $\phi(\l)=(1-\l)^{-1/2}$ for every $\l\in[-1,1)$
	and $\phi(1)=0$. 

	Even though $\phi$ is unbounded, it still follows from spectral theory that
	$\int\phi d\SM_{\P}\:=\:\int\left(1-\l\right)^{-1/2}\SM_{\P}(d\l)
	\:=\:\left(\id-\P\right)^{-1/2}$, the inverse operator of $\left(\id-\P\right)^{1/2}$,
	including equality of domains (for a formal argument, see \cite{dunford schwartz}
	Theorem XII.2.9). Thus our problem reduces to showing that
	$\left\{f\in\LTN:v(f,P)<\infty\right\}=\domain\left(\int\phi d\SM_{\P}\right)$.

	Now notice that for every $f\in\LTN$, as $\EM_{f,\P}(\{1\})=0$ by Lemma \ref{E(1)=0},
	$$\int_{[-1,1)}\left(\frac{1}{1-\l}\right)\EM_{f,\P}(d\l)
	=\int_{[-1,1)}\left|\phi(\l)\right|^2\EM_{f,\P}(d\l)=\int\left|\phi\right|^2
	d\EM_{f,\P}.$$
	Thus by Theorem \ref{formula}, for every $f\in\LTN$,
	$v(f,P)=\int_{[-1,1)}\left(\frac{1+\l}{1-\l}\right)\EM_{f,\P}(d\l)$, so
	$$v(f,P)<\infty\qquad\textit{if and only if}\qquad
	\int_{[-1,1)}\left(\frac{1}{1-\l}\right)\EM_{f,\P}(d\l)
	=\int\left|\phi\right|^2d\EM_{f,\P}<\infty.$$
	Thus we would like to show that $\int\left|\phi\right|^2d\EM_{f,\P}$ is finite
	if and only if $f\in\\\domain\left(\int\phi d\SM_{\P}\right)$.

	For every $n\in\N$, let $\phi_n:=\I_{\left(\left|\phi\right|<n\right)}\,\phi$
	and $\D_n:=\phi^{-1}(-n,n)=\phi^{-1}_n(\R)$. Then notice
	$\cup_{k=1}^{\infty}\D_n=\R$ and $\D_n$ is a Borel
	set for every $n$ as $\phi$ is Borel measurable.

	As $\phi$ is positive, notice $\phi_n\leq\phi_{n+1}$ for every $n$. Thus as
	$\phi_n\to\phi$ pointwise for every $\l\in\s(\P)$, by the Monotone Convergence
	Theorem,
	$$\int\left|\phi_n\right|^2d\EM_{f,\P}\to\int\left|\phi\right|^2d\EM_{f,\P}.$$

	As $\phi_n$ is bounded for every $n$, by definition of $\phi_n$ we have
	\begin{align*}
		\int\left|\phi_n\right|^2d\EM_{f,\P}
		&\:=\:\norm{\left(\int\phi_nd\SM_{\P}\right)f}^2\\
		&\:=\:\norm{\left(\int\phi d\SM_{\P}\right)\SM_{\P}\left(
			\cup_{k=1}^n\D_k\right)f}^2\\
		&\:=\:\norm{\SM_{\P}\left(\cup_{k=1}^n\D_k\right)
			\left(\int\phi d\SM_{\P}\right)f}^2.
	\end{align*}
	Thus as $\SM_{\P}(\cup_{k=1}^n\D_k)\to\SM_{\P}(\R)=\id$ as $n\to\infty$ in the
	strong operator topology, we have
	$$\int\left|\phi_n\right|^2d\EM_{f,\P}
	\:=\:\norm{\SM_{\P}\left(\cup_{k=1}^n\D_k\right)\left(\int\phi d\SM_{\P}\right)f}^2
	\:\to\:\norm{\left(\int\phi d\SM_{\P}\right)f}^2.$$
	
	So as $\int\left|\phi_n\right|^2d\EM_{f,\P}\to\int\left|\phi\right|^2d\EM_{f,\P}$
	and $\int\left|\phi_n\right|^2d\EM_{f,\P}\to\norm{\left(\int\phi d\SM_{\P}\right)f}^2$,
	we have
	$$\int\left|\phi\right|^2d\EM_{f,\P}\:=\:\norm{\left(\int\phi d\SM_{\P}\right)f}^2.$$

	Thus as $\norm{\left(\int\phi d\SM_{\P}\right)f}^2<\infty$ \textit{if and only if}
	$f\in\domain\left(\int\phi d\SM_{\P}\right)$, this completes the proof.
\end{proof}
\begin{rem}
	In \cite{kipnis varadhan}, Kipnis and Varadhan state that
	$\{f\in\LTN:v(f,P)<\infty\}=\range\left[\left(\id-\P\right)^{1/2}\right]$. As
	$\range\left[\left(\id-\P\right)^{1/2}\right]=
	\domain\left[\left(\id-\P\right)^{-1/2}\right]$, these are equivalent.
\end{rem}

Now we are ready to prove the ``only if'' direction of Theorem \ref{equiv}, as outlined in
\cite{mira geyer}. Recall the ``only if'' direction of Theorem
\ref{equiv}; if $P$ and $Q$ are $\varphi$-irreducible Markov kernels
reversible with respect to $\pi$, such that $P$ efficiency dominates $Q$, then for every
$f\in\LTN$, $\lft f,\P f\rgt\leq\lft f,\Q f\rgt$.

\bigskip
\noindent
\textit{Proof of ``only if'' Direction of Theorem \ref{equiv}.}

	As $P$ efficiency dominates $Q$, if $f\in\LTN$ such that
	$v(f,Q)<\infty$, then $v(f,P)\leq v(f,Q)<\infty$. Thus
	$\{f\in\LTN:v(f,Q)<\infty\}\subseteq\{f\in\LTN:v(f,P)<\infty\}$.
	So by Lemma \ref{space of fin var}, we have
	$$\domain\left[\left(\id-\Q\right)^{-1/2}\right]\:\subseteq\:
	\domain\left[\left(\id-\P\right)^{-1/2}\right].$$

	By Theorem \ref{formula}, for every $f\in\LTN$,
	$v(f,P)=\int_{[-1,1)}\frac{1+\l}{1-\l}\EM_{f,\P}(d\l)$ and
	$v(f,Q)\\=\int_{[-1,1)}\frac{1+\l}{1-\l}\EM_{f,\Q}(d\l)$. Thus as
	$\int_{[-1,1)}\frac{1+\l}{1-\l}\EM_{f,\P}(d\l)\:=\:
	\norm{f}^2+2\int_{[-1,1)}\frac{\l}{1-\l}\EM_{f,\P}(d\l)$ and
	$\int_{[-1,1)}\frac{1}{1-\l}\EM_{f,\P}(d\l)\:=\:\norm{f}^2
	+\int_{[-1,1)}\frac{\l}{1-\l}\EM_{f,\P}(d\l)$
	and similarly for $Q$, as $P$ efficiency dominates $Q$,
	for every $f\in\LTN$,
	$$\int_{[-1,1)}\frac{1}{1-\l}\EM_{f,\P}(d\l)\leq
	\int_{[-1,1)}\frac{1}{1-\l}\EM_{f,\Q}(d\l).$$

	Furthermore, as seen in the proof of Lemma \ref{space of fin var}, for every\\
	$f\in\domain\left[\left(\id-\P\right)^{-1/2}\right]$, (recall that in the proof
	of Lemma \ref{space of fin var}, $\int\left|\phi\right|^2d\EM_{f,\P}=
	\int_{[-1,1)}\left(\frac{1}{1-\l}\right)\EM_{f,\P}(d\l)$),
	$$\int_{[-1,1)}\left(\frac{1}{1-\l}\right)\EM_{f,\P}(d\l)\:=\:
	\norm{\left(\id-\P\right)^{-1/2}f}^2,$$
	and similarly $\int_{[-1,1)}\left(\frac{1}{1-\l}\right)\EM_{f,\Q}(d\l)=
	\norm{\left(\id-\Q\right)^{-1/2}f}^2$. Thus notice that as\\
	$\domain\left[\left(\id-\Q\right)^{-1/2}\right]\subseteq
	\domain\left[\left(\id-\P\right)^{-1/2}\right]$, for every $f\in\\
	\domain\left[\left(\id-\Q\right)^{-1/2}\right]$,
	$$\norm{\left(\id-\P\right)^{-1/2}f}^2\leq\norm{\left(\id-\Q\right)^{-1/2}f}^2.$$

	So, by Lemma \ref{lemma 6.5}, with $T=\left(\id-\Q\right)$ and
	$N=\left(\id-\P\right)$, for every $f\in\LTN$,
	$\lft f,\left(\id-\Q\right)f\rgt\leq\lft f,\left(\id-\P\right)f\rgt$, and thus
	$$\lft f,\P f\rgt\leq\lft f,\Q f\rgt$$
\qed

The condition $\lft f,\P f\rgt\leq\lft f,\Q f\rgt$ for every $f\in\LTN$ is
equivalent to\\ $\lft f,\left(\Q-\P\right)f\rgt\geq0$ for every $f\in\LTN$, i.e.\
equivalent to $\Q-\P$ being a positive operator. We can
relate this back to the spectrum of the operator $\Q-\P$ with the following lemma.

\begin{lem}\label{pos eigenvalues} If $T$ is a bounded self-adjoint
	linear operator on a Hilbert space $\H$, then $T$ is positive
	if and only if $\s(T)\subseteq[0,\infty)$.
\end{lem}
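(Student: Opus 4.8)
The plan is to prove the two directions separately, using the spectral calculus from Section \ref{subsect:fa prelims} for the forward direction and a ``bounded below'' estimate together with Lemma \ref{delt} for the converse.

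First I would assume $\s(T)\subseteq[0,\infty)$ and deduce positivity. Since $T$ is bounded and self-adjoint it is normal, so the functional calculus applies with $\phi(\l)=\l$, giving $\lft f,Tf\rgt=\int_{\s(T)}\l\,\EM_{f,T}(d\l)$ for every $f\in\H$ (exactly as $\lft f,\P^kf\rgt=\int_{\s(\P)}\l^k\EM_{f,\P}(d\l)$ was used in Theorem \ref{formula}). The measure $\EM_{f,T}$ is a positive measure, since $\EM_{f,T}(A)=\lft f,\SM_T(A)f\rgt=\norm{\SM_T(A)f}^2\geq0$ as $\SM_T(A)$ is an orthogonal projection. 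The integral runs over $\s(T)\subseteq[0,\infty)$, where the integrand $\l$ is nonnegative, so $\lft f,Tf\rgt\geq0$ and $T$ is positive.

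For the converse I would assume $T$ is positive. Since $T$ is self-adjoint we already have $\s(T)\subseteq\R$ (see Section \ref{subsect:fa prelims}), so it suffices to rule out negative reals. Fix $\l<0$ and set $a:=-\l>0$. For every $f\in\H$, positivity yields $\lft f,(T-\l\id)f\rgt=\lft f,Tf\rgt+a\norm{f}^2\geq a\norm{f}^2$, and then by the Cauchy--Schwarz inequality $\norm{(T-\l\id)f}\,\norm{f}\geq\left|\lft f,(T-\l\id)f\rgt\right|\geq a\norm{f}^2$, so $\norm{(T-\l\id)f}\geq a\norm{f}$. Thus $T-\l\id$ is bounded below by $a>0$; as it is self-adjoint (hence normal), Lemma \ref{delt} gives that $T-\l\id$ is invertible with $0\notin\s(T-\l\id)$, i.e.\ $\l\notin\s(T)$. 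Since $\l<0$ was arbitrary, $\s(T)\subseteq[0,\infty)$.

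The computations here are routine; the only step needing care is the converse, where one converts the scalar estimate $\lft f,(T-\l\id)f\rgt\geq a\norm{f}^2$ into the operator bound $\norm{(T-\l\id)f}\geq a\norm{f}$ and then invokes the fact that a bounded-below normal operator is invertible with $0$ outside its spectrum. This is precisely the tool (Lemma \ref{delt}) already employed in the ``if'' direction of Theorem \ref{equiv}, so no additional machinery is required.
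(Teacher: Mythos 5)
Your proposal is correct and follows essentially the same argument as the paper's proof: the direction $\s(T)\subseteq[0,\infty)\Rightarrow T$ positive via the spectral calculus and positivity of the measures $\EM_{f,T}$, and the direction $T$ positive $\Rightarrow\s(T)\subseteq[0,\infty)$ via the Cauchy--Schwarz lower bound $\norm{(T-\l\id)f}\geq|\l|\norm{f}$ combined with Lemma \ref{delt}. The only difference is the order in which the two implications are presented.
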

\begin{proof}
	For the forward direction,
	if $\l<0$, then for every $f\in\H$ such that $f\neq0$,
	by the Cauchy-Schwartz inequality,
	\begin{align*}
		\norm{(T-\l)f}\norm{f}
		&\geq|\lft(T-\l)f,f\rgt|\\
		&=|\lft Tf,f\rgt-\l\norm{f}^2|\\
		&\geq\lft Tf,f\rgt+|\l|\norm{f}^2
		&(\text{as $\l<0$ and by assumption})\\
		&\geq|\l|\norm{f}^2.
		&(\text{by assumption})
	\end{align*}
	Thus as $f\neq0$ and $\l\neq0$,
	$$\norm{(T-\l)f}\geq|\l|\norm{f}>0.$$
	Thus as $T-\l$ is normal (as it is self-adjoint), $T-\l$ is invertible
	(by Lemma \ref{delt}), and $\l\not\in\s(T)$ by definition.

	For the converse, if $\s(T)\subseteq[0,\infty)$, then as $\EM_{f,T}$ is a
	positive measure for every $f\in\H$ (as $\SM_{T}(A)$ is a self-adjoint
	projection for every Borel set $A$),
	$$\lft f,Tf\rgt=\int_{\l\in\s(T)}\l\EM_{f,T}(d\l)=
	\int_{\l\in[0,\infty)}\l\EM_{f,T}(d\l)\geq0,\qquad f\in\H.$$
\end{proof}

This gives us the following.

\begin{thm}\label{Q-P} If $P$ and $Q$ are $\varphi$-irreducible
	Markov kernels reversible with respect to $\pi$,
	then $P$ efficiency dominates $Q$ if and only if
	$\s(\Q-\P)\subseteq[0,\infty)$.
\end{thm}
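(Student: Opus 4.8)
The plan is to chain together Theorem~\ref{equiv} and Lemma~\ref{pos eigenvalues}, since both have already been established. The statement of Theorem~\ref{Q-P} is essentially the composition of these two results through the bridging observation (made in the paragraph immediately before Lemma~\ref{pos eigenvalues}) that the covariance-dominance condition $\lft f,\P f\rgt\leq\lft f,\Q f\rgt$ for all $f\in\LTN$ is literally the same as saying $\Q-\P$ is a positive operator on $\LTN$.

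First I would invoke Theorem~\ref{equiv}: since $P$ and $Q$ are $\varphi$-irreducible and reversible with respect to $\pi$, $P$ efficiency dominates $Q$ if and only if $\lft f,\P f\rgt\leq\lft f,\Q f\rgt$ for every $f\in\LTN$. Next I would rewrite this inequality by linearity as $\lft f,(\Q-\P)f\rgt\geq0$ for every $f\in\LTN$, which is exactly the definition of $\Q-\P$ being a positive operator on the Hilbert space $\LTN$. The only small check here is that $\Q-\P$ is a bounded self-adjoint operator on $\LTN$: boundedness follows because $\norm{\P},\norm{\Q}\leq1$ (established via inequality~(\ref{ineq:1})), and self-adjointness follows because $\P$ and $\Q$ are each self-adjoint on $\LTN$ (as $P$ and $Q$ are reversible), and the difference of self-adjoint operators is self-adjoint.

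Finally I would apply Lemma~\ref{pos eigenvalues} to the bounded self-adjoint operator $T=\Q-\P$ on $\H=\LTN$, which states that $T$ is positive if and only if $\s(T)\subseteq[0,\infty)$. Combining the three equivalences gives: $P$ efficiency dominates $Q$ $\iff$ $\Q-\P$ is positive $\iff$ $\s(\Q-\P)\subseteq[0,\infty)$, which is the claim.

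I do not anticipate a genuine obstacle, since all the analytic work was front-loaded into Theorem~\ref{equiv} (the spectral-measure computation of the asymptotic variance and the operator-inequality machinery of Lemmas~\ref{lemma 6} and~\ref{lemma 6.5}) and into Lemma~\ref{pos eigenvalues}. The only point requiring a sentence of care is confirming that $\Q-\P$ meets the hypotheses of Lemma~\ref{pos eigenvalues}, namely bounded and self-adjoint on $\LTN$; once that is noted, the proof is a three-line concatenation of \emph{iff} statements and should be written as such rather than re-deriving anything.
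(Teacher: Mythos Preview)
Your proposal is correct and follows essentially the same approach as the paper's proof: invoke Theorem~\ref{equiv}, rewrite the covariance-dominance inequality as positivity of $\Q-\P$, and then apply Lemma~\ref{pos eigenvalues} to the bounded self-adjoint operator $\Q-\P$. The paper's proof is exactly this three-line concatenation of equivalences, so there is nothing to add.
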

\begin{proof}
	By Theorem \ref{equiv}, $P$ efficiency-dominates $Q$ \textit{if
	and only if} $\lft f,\P f\rgt\leq\lft f,\Q f\rgt$ for every
	$f\in\LTN$. As $\P$ and $\Q$ are both bounded linear operators,
	this is equivalent to
	$$\lft f,(\Q-\P)f\rgt\geq0,\quad\text{for every $f\in\LTN$,}$$
	or in other words equivalent to $\Q-\P$ being a positive operator.

	Thus as $\Q-\P$ is a bounded self-adjoint linear operator
	on $\LTN$, by Lemma \ref{pos eigenvalues},
	$\Q-\P$ is a positive operator \textit{if and only
	if} $\s(\Q-\P)\subseteq[0,\infty)$.
\end{proof}

As seen in \cite{green han}, antithetic methods can lead to improved efficiency of MCMC
methods. In this paper, we define \textit{antithetic} Markov kernels as Markov kernels
$P$ such that $\s(\P)\subseteq[-1,0]$ when restricted to $\LTN$.
We will show here that antithetic reversible Markov kernels are more efficient
than i.i.d.\ sampling from $\pi$ directly.

\begin{prop}\label{antithetic}
	Let $P$ be a $\varphi$-irreducible Markov kernel reversible
	with respect to $\pi$. Then $P$ is antithetic if and only if $P$ efficiency
	dominates $\Pi$ (the Markov kernel corresponding to i.i.d.\ sampling
	from $\pi$).
\end{prop}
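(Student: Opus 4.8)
The plan is to reduce the statement to a purely spectral condition on $\P$ by applying Theorem \ref{Q-P} with $Q=\Pi$, and then to combine the resulting one-sided containment with the standing bound $\s(\P)\subseteq[-1,1]$.

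First I would verify that $\Pi$ itself satisfies the hypotheses of Theorem \ref{Q-P}, so that the theorem may be invoked with $Q=\Pi$. Reversibility of $\Pi$ with respect to $\pi$ is immediate, since $\Pi(x,dy)\pi(dx)=\pi(dy)\pi(dx)$ is symmetric in $x$ and $y$. For $\varphi$-irreducibility, take $\varphi=\pi$: for any $A\in\F$ with $\pi(A)>0$ we have $\Pi(x,A)=\pi(A)>0$ for every $x\in\X$, so $n=1$ suffices. Thus $\Pi$ is a $\varphi$-irreducible reversible kernel, and both $P$ and $\Pi$ meet the hypotheses.

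Next I would use the observation from Section \ref{subsect:mc prelims} that $\Pi$ restricted to $\LTN$ is the zero operator, so that as operators on $\LTN$ we have $\Pi-\P=-\P$. By Theorem \ref{Q-P}, $P$ efficiency dominates $\Pi$ if and only if $\s(\Pi-\P)=\s(-\P)\subseteq[0,\infty)$. Invoking the elementary spectral identity $\s(-\P)=-\s(\P)$ — which holds because $-\P-\l\id=-(\P-(-\l)\id)$ is invertible exactly when $\P-(-\l)\id$ is — this condition is equivalent to $\s(\P)\subseteq(-\infty,0]$. Finally, I would combine this with the a priori bound $\s(\P)\subseteq[-1,1]$, established in Section \ref{subsect:fa prelims} from $\norm{\P}\leq1$ and the self-adjointness of $\P$. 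Intersecting, $\s(\P)\subseteq(-\infty,0]$ is equivalent to $\s(\P)\subseteq[-1,0]$, which is precisely the definition of $P$ being antithetic, closing the chain of equivalences.

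I do not anticipate a genuine obstacle here, as the argument is a short chain of equivalences once Theorem \ref{Q-P} is in hand. The only points demanding care are confirming that $\Pi$ qualifies as a $\varphi$-irreducible reversible kernel so the theorem applies, and correctly merging the derived one-sided containment $\s(\P)\subseteq(-\infty,0]$ with the standing two-sided bound to recover the interval $[-1,0]$.
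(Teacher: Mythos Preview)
Your proposal is correct and follows essentially the same route as the paper: identify $\Pi$ with the zero operator on $\LTN$, apply Theorem \ref{Q-P}, and read off the spectral condition. The paper splits the argument into the two implications separately, whereas you run it as a single chain of equivalences; your explicit check that $\Pi$ is $\varphi$-irreducible and reversible (so that Theorem \ref{Q-P} actually applies with $Q=\Pi$) is a detail the paper leaves implicit.
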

\begin{proof}
	Recall from Section \ref{subsect:mc prelims} that for every
	$f\in\LTN$, for every $x\in\X$,
	$$\Pi f(x)=\int_{y\in\X}f(y)\Pi(x,dy)=\int_{y\in\X}f(y)\pi(dy)=\E_{\pi}(f)=0.$$
	So $\Pi f\equiv0$ for every $f\in\LTN$, and thus $\Pi\equiv0$.

	Say $P$ is antithetic. Then $\s(\Pi-\P)=\s(-\P)\subseteq[0,1]$. Thus
	Theorem \ref{Q-P} shows that $P$ efficiency dominates $Q$.

	On the other hand if $P$ efficiency dominates $\Pi$, then by Theorem
	\ref{Q-P}, $\s(\Pi-\P)\subseteq[0,\infty)$. Thus as $\Pi\equiv0$,
	$\s(\P)\subseteq(-\infty,0]$, and thus $P$ is antithetic.
\end{proof}

We now show that efficiency dominance is partial ordering on the set of
$\varphi$-irreducible reversible Markov kernels reversible with respect
to $\pi$.

\begin{thm}\label{partial ordering} Efficiency dominance is a partial
	order on $\varphi$-irreducible reversible
	Markov kernels, reversible with respect to $\pi$ (reversible with
	respect to the same probability measure).
\end{thm}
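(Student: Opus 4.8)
The plan is to show that efficiency dominance satisfies the three defining properties of a partial order: reflexivity, antisymmetry, and transitivity. The cleanest route is to work through the characterisation established in Theorem \ref{equiv}, which says that $P$ efficiency dominates $Q$ if and only if $\lft f,\P f\rgt\leq\lft f,\Q f\rgt$ for every $f\in\LTN$, equivalently that $\Q-\P$ is a positive operator on $\LTN$. Phrasing everything in terms of the operators $\P$ and $\Q$ reduces the problem to elementary facts about positive operators on a Hilbert space.

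**Reflexivity and transitivity.**
Reflexivity is immediate: for any $\varphi$-irreducible reversible $P$, we have $\lft f,\P f\rgt\leq\lft f,\P f\rgt$ trivially, so $P$ efficiency dominates itself. For transitivity, suppose $P$ efficiency dominates $Q$ and $Q$ efficiency dominates $R$, where all three are $\varphi$-irreducible and reversible with respect to $\pi$. By Theorem \ref{equiv}, for every $f\in\LTN$ we have $\lft f,\P f\rgt\leq\lft f,\Q f\rgt$ and $\lft f,\Q f\rgt\leq\lft f,\RO f\rgt$. Chaining these inequalities gives $\lft f,\P f\rgt\leq\lft f,\RO f\rgt$ for every $f\in\LTN$, so by Theorem \ref{equiv} again $P$ efficiency dominates $R$. (Here I would use the paper's calligraphic convention, writing $\RO$ for the operator associated to $R$.)

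**Antisymmetry.**
The substantive step is antisymmetry: if $P$ efficiency dominates $Q$ and $Q$ efficiency dominates $P$, I must conclude $P=Q$ (as Markov kernels, $\pi$-a.e.). By Theorem \ref{equiv} both directions give, for every $f\in\LTN$, simultaneously $\lft f,\P f\rgt\leq\lft f,\Q f\rgt$ and $\lft f,\Q f\rgt\leq\lft f,\P f\rgt$, hence $\lft f,\P f\rgt=\lft f,\Q f\rgt$, i.e.\ $\lft f,(\Q-\P)f\rgt=0$ for all $f\in\LTN$. Since $\Q-\P$ is a bounded self-adjoint operator (both $P$ and $Q$ are reversible), the vanishing of this real quadratic form forces $\Q-\P=0$ as an operator on $\LTN$: for a self-adjoint operator, the polarisation identity recovers $\lft f,(\Q-\P)g\rgt$ from the quadratic form, so $\lft f,(\Q-\P)g\rgt=0$ for all $f,g$, whence $\Q f=\P f$ for every $f\in\LTN$.

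**The main obstacle.**
The delicate point is translating $\Q=\P$ as operators on $\LTN$ back into $P=Q$ as Markov kernels. Equality of the operators gives $\int f(y)P(x,dy)=\int f(y)Q(x,dy)$ for $\pi$-a.e.\ $x$, for each fixed $f\in\LTN$, and one must upgrade this to $P(x,\cdot)=Q(x,\cdot)$ for $\pi$-a.e.\ $x$. The plan is to extend from $\LTN$ to all of $\LT$ (the operators also agree on the constant $\one$, since $\P\one=\Q\one=\one$, so they agree on all of $\LT$), then apply the agreement to a countable separating family of bounded functions — for instance indicators of sets in a countable generating algebra, or a countable dense family in $\LT$ — and use that two probability measures agreeing on such a family coincide. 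I expect this measure-theoretic identification to be the part requiring the most care, and I would state the conclusion as equality $\pi$-almost everywhere, which is the appropriate notion of equality for kernels acting on $\LT$ and is all that efficiency dominance can possibly detect.
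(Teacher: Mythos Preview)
Your approach is essentially the paper's: reflexivity is trivial, transitivity comes from chaining the inequalities of Theorem \ref{equiv}, and antisymmetry is obtained by polarising the self-adjoint operator $\Q-\P$ once its quadratic form vanishes on $\LTN$. The paper stops at $\P=\Q$ and simply writes ``thus $P=Q$'' without further comment; your additional paragraph on recovering kernel equality from operator equality identifies a genuine gap the paper glosses over, and your remark that the conclusion should be read as $\pi$-a.e.\ equality of kernels (and that pushing further would require something like a countably generated $\sigma$-algebra) is correct and more careful than the original.
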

\begin{proof}
	Reflexivity is trivial.

	Suppose $P$ and $Q$ are $\varphi$-irreducible reversible
	Markov kernels reversible with respect to $\pi$ such that
	$P$ efficiency dominates $Q$ and $Q$ efficiency dominates $P$.
	Then by Theorem \ref{equiv}, for every $f\in\LTN$,
	$$\lft f,\P f\rgt\leq\lft f,\Q f\rgt\quad\text{and}
	\quad\lft f,\P f\rgt\geq\lft f,\Q f\rgt,$$
	so $\lft f,\P f\rgt=\lft f,\Q f\rgt$ for every $f\in\LTN$.
	Thus $\lft f,(\Q-\P)f\rgt=0$ for every $f\in\LTN$.
	So for every $g,h\in\LTN$, as $\Q$ and $\P$ are
	self-adjoint,
	\begin{align*}
		0
		&=\lft g+h,(\Q-\P)(g+h)\rgt\\
		&=\lft g,(\Q-\P)g\rgt+\lft h,(\Q-\P)h\rgt
		+2\lft g,(\Q-\P)h\rgt\\
		&=2\lft g,(\Q-\P)h\rgt.
	\end{align*}
	So for every $g,h\in\LTN$, $\lft g,(\Q-\P)h\rgt=0$.
	Thus $\Q-\P=0$, so $\P=\Q$, and thus $P=Q$.
	So the relation is antisymmetric.
	
	Suppose $P,Q$ and $R$ are $\varphi$-irreducible reversible
	Markov kernels reversible with respect to $\pi$, such that
	$P$ efficiency dominates $Q$
	and $Q$ efficiency dominates $R$. Then by Theorem
	\ref{equiv}, for every $f\in\LTN$,
	$\lft f,(\RO-\Q)f\rgt\geq0$ and $\lft f,(\Q-\P)f\rgt
	\geq0$. So, for every $f\in\LTN$,
	$$\lft f,(\RO-\P)f\rgt=\lft f,(\RO-\Q)f\rgt+\lft f,
	(\Q-\P)f\rgt\geq0.$$
	And thus by Theorem \ref{equiv}, we have $P$ efficiency dominates
	$R$, so the relation is transitive.
\end{proof}

\section{Combining Chains}\label{sect:mixing}

In this section, we generalise the results of Neal and Rosenthal in \cite{finite}
on the efficiency dominance of combined chains
from finite state spaces to general state spaces. We state the
most general result first, a sufficient condition for the effiency dominance of
combined kernels, and then an important Corollary following
from it.

\begin{thm}\label{mix}
	Let $P_1,\dots,P_l$ and $Q_1,\dots,Q_l$ be Markov kernels
	reversible with respect to $\pi$. Let $\a_1,\dots,\a_l$ be
	mixing probabilities (i.e.\ $\a_k\geq0$ for every $k$, and
	$\sum\a_k=1$) such that $P=\sum\a_kP_k$ and $Q=\sum\a_kQ_k$ are
	$\varphi$-irreducible.

	If $\Q_k-\P_k$ is a positive operator
	(i.e.\ $\s(\Q_k-\P_k)\subseteq[0,\infty)$) for
	every $k$, then $P$ efficiency dominates $Q$.
\end{thm}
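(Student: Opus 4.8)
The plan is to reduce everything to the covariance-dominance condition supplied by Theorem \ref{equiv}, since all of the analytic heavy lifting has already been carried out there. The first step is to observe that the operator attached to a mixture kernel is the corresponding mixture of operators: for every $f\in\LTN$ and every $x\in\X$, linearity of the integral gives $\P f(x)=\int_{y\in\X}f(y)\sum_k\a_kP_k(x,dy)=\sum_k\a_k\P_kf(x)$, so that $\P=\sum_k\a_k\P_k$, and likewise $\Q=\sum_k\a_k\Q_k$. Subtracting, $\Q-\P=\sum_k\a_k(\Q_k-\P_k)$.

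Next I would verify that Theorem \ref{equiv} actually applies, i.e.\ that $P$ and $Q$ are $\varphi$-irreducible kernels reversible with respect to $\pi$. Irreducibility is assumed in the hypotheses. For reversibility I would note that a convex combination of reversible kernels is again reversible: integrating the detailed-balance identities $P_k(x,dy)\pi(dx)=P_k(y,dx)\pi(dy)$ against the weights $\a_k$ and summing yields the same identity for $P=\sum_k\a_kP_k$, and similarly for $Q$. Thus both hypotheses of Theorem \ref{equiv} are in place.

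With these two facts established, the conclusion is immediate. For any $f\in\LTN$, the operator decomposition above and bilinearity of the inner product give $\lft f,(\Q-\P)f\rgt=\sum_k\a_k\lft f,(\Q_k-\P_k)f\rgt$. Each $\Q_k-\P_k$ is positive by hypothesis, so every summand $\lft f,(\Q_k-\P_k)f\rgt$ is nonnegative (this is exactly the definition of a positive operator given in Section \ref{subsect:fa prelims}); since the weights $\a_k$ are nonnegative, the whole sum is $\geq0$. Hence $\lft f,\P f\rgt\leq\lft f,\Q f\rgt$ for every $f\in\LTN$, and Theorem \ref{equiv} then yields that $P$ efficiency dominates $Q$.

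The point worth stressing is that there is no genuine obstacle here: once Theorem \ref{equiv} is available, the statement follows from nothing more than linearity of the kernel-to-operator map and the elementary fact that a nonnegatively-weighted sum of positive operators is positive. The only thing requiring care is checking the hypotheses of Theorem \ref{equiv}, in particular that mixing preserves reversibility, rather than any spectral or asymptotic-variance computation. This is precisely the kind of simplification the paper advertises as flowing from Theorem \ref{equiv}.
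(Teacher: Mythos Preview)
Your proof is correct and follows essentially the same approach as the paper: decompose $\Q-\P$ as the nonnegative combination $\sum_k\a_k(\Q_k-\P_k)$ of positive operators, conclude $\lft f,(\Q-\P)f\rgt\geq0$ for all $f\in\LTN$, and invoke Theorem~\ref{equiv}. Your version is slightly more explicit in justifying the operator decomposition and the preservation of reversibility under mixing, but the argument is otherwise identical.
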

\begin{proof}
	By definition of a positive operator (Section \ref{subsect:fa prelims}),
	for every $k\in\{1,\dots,l\}$, we have
	$$\lft f,\left(\Q_k-\P_k\right)f\rgt\geq0,\qquad\forall f\in\LTN.$$
	So, for every $f\in\LTN$,
	$$\lft f,\left(\Q-\P\right)f\rgt=\sum\a_k\lft f,
	\left(\Q_k-\P_k\right)f\rgt\geq0.$$
	Thus as $P$ and $Q$ are also $\varphi$-irreducible
	(and reversible as each $P_k$ and $Q_k$ are reversible),
	by Theorem \ref{equiv},
	$P$ effieciency dominates $Q$.
\end{proof}
\begin{rem} If the Markov kernels $P_1,\cdots,P_l$ and $Q_1,\dots,Q_l$ are
	$\varphi$-irreducible, then Theorem \ref{mix} can be restated as follows.
	Let $P_1,\dots,P_l$ and $Q_1,\dots,Q_l$ be $\varphi$-irreducible
	Markov kernels reversible with respect to $\pi$ and $\a_1,\dots,\a_l$ be
	mixing probabilities. If $P_k$ efficiency dominates $Q_k$ for every $k$,
	then $P=\sum\a_kP_k$ efficiency dominates $Q=\sum\a_kQ_k$.
\end{rem}

The converse of Theorem \ref{mix} is not true, even in the case
where $P_1,\dots,P_l$ and $Q_1,\dots,Q_l$ are $\varphi$-irreducible. For a simple counter
example, take $l=2$, and let $P_1$ and $P_2$ be any $\varphi$-irreducible
Markov kernels, reversible with respect to a probability measure $\pi$ such that
$P_1$ efficiency dominates $P_2$. Then by taking $Q_1=P_2$, $Q_2=P_1$ and $\a_1
=\a_2=1/2$, we have $P=1/2\left(P_1+P_2\right)$ and $Q=1/2\left(Q_1+Q_2\right)=
1/2\left(P_1+P_2\right)$, so $P=Q$. Thus $P$ efficiency dominates $Q$ trivially,
but as $P_1$ efficiency dominates $P_2$, $Q_2=P_1$ efficiency dominates $P_2$,
thus the components do not efficiency dominate each other.

What is true is the following.

\begin{cor}\label{3 comb} Let $P$, $Q$ and $R$ be Markov kernels
	reversible with respect to $\pi$, such that $P$ and $Q$ are
	$\varphi$-irreducible. Then for every $\a\in(0,1)$, $P$ efficiency
	dominates $Q$ if and only if $\a P+(1-\a)R$ efficiency
	dominates $\a Q+(1-\a)R$.
\end{cor}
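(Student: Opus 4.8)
The plan is to collapse the whole statement onto Theorem \ref{equiv} by a single operator computation. Write $P' := \a P + (1-\a)R$ and $Q' := \a Q + (1-\a)R$. Since integrating a fixed $f$ against a convex combination of measures is the corresponding convex combination of the integrals, the associated operators satisfy $\P' = \a\P + (1-\a)\RO$ and $\Q' = \a\Q + (1-\a)\RO$. Subtracting, the common $\RO$ term cancels and
$$\Q' - \P' = \a(\Q - \P).$$
Because $\a > 0$, we have $\lft f,(\Q'-\P')f\rgt = \a\lft f,(\Q-\P)f\rgt$ for every $f$, so $\Q'-\P'$ is a positive operator on $\LTN$ if and only if $\Q-\P$ is. By Theorem \ref{equiv} the former is equivalent to $P'$ efficiency dominating $Q'$, and the latter to $P$ efficiency dominating $Q$; chaining these equivalences gives exactly the claimed ``if and only if''.

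Before invoking Theorem \ref{equiv} on the pair $(P',Q')$ I must verify its hypotheses: that $P'$ and $Q'$ are reversible and $\varphi$-irreducible with respect to $\pi$. Reversibility is immediate, since detailed balance $P(x,dy)\pi(dx)=P(y,dx)\pi(dy)$ holds for each of $P$, $Q$, $R$ by assumption and is preserved under convex combinations, so both $P'$ and $Q'$ are reversible. For $\varphi$-irreducibility I work at the level of kernels rather than operators: expanding the $n$-fold composition $(P')^n$ yields a sum of non-negative terms, one of which is $\a^n P^n$, so $(P')^n(x,A)\geq\a^n P^n(x,A)$ for every $x$ and every measurable $A$. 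Since $P$ is $\varphi$-irreducible and $\a>0$, for each $A$ with $\varphi(A)>0$ and each $x$ there is an $n$ with $P^n(x,A)>0$, whence $(P')^n(x,A)>0$; thus $P'$ is $\varphi$-irreducible with the same irreducibility measure, and likewise $Q'$ is $\varphi$-irreducible because $Q$ is. Note that $R$ itself need not be $\varphi$-irreducible, which is consistent with the hypotheses assuming irreducibility only for $P$ and $Q$.

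I expect the operator identity $\Q'-\P'=\a(\Q-\P)$ and the two applications of Theorem \ref{equiv} to be entirely routine. The only step that genuinely requires care is the $\varphi$-irreducibility of the combined kernels: there one must argue at the level of transition probabilities, justifying the pointwise kernel inequality $(P')^n(x,A)\geq\a^n P^n(x,A)$, rather than simply manipulating the operators $\P'$ on $\LTN$. Once the hypotheses of Theorem \ref{equiv} are in place for $(P',Q')$, the positivity of $\a(\Q-\P)$ transfers back and forth with $\a>0$ and the proof is complete.
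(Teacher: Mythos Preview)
Your proposal is correct and follows essentially the same approach as the paper: both arguments rest on the identity $\Q'-\P'=\a(\Q-\P)$ together with the $\varphi$-irreducibility bound $(P')^n(x,A)\geq\a^nP^n(x,A)$. The only cosmetic difference is that the paper cites Theorem~\ref{mix} for one direction and Theorem~\ref{Q-P} for the other, whereas you apply Theorem~\ref{equiv} symmetrically to both; since Theorems~\ref{mix} and~\ref{Q-P} are themselves immediate consequences of Theorem~\ref{equiv}, your version is if anything slightly more direct.
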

\begin{proof}
	As $P$ and $Q$ are $\varphi$-irreducible, $\a P+(1-\a)R$
	and $\a Q+(1-\a)R$ are also $\varphi$-irreducible (to see this use
	the same $\s$-finite measure and the fact that for every $x\in\X$ and
	$A\in\F$, $\left(\a P+(1-\a)R\right)^n(x,A)\geq\a^nP^n(x,A)$).

	If $P$ efficiency dominates $Q$, by Theorem \ref{mix},
	$\a P+(1-\a)R$ efficiency dominates $\a Q+(1-\a)R$.

	If $\a P+(1-\a)R$ efficiency dominates $\a Q+(1-\a)R$,
	by Theorem \ref{Q-P},
	$$\s(\Q-\P)=\s(\a^{-1}\left[\a\Q+(1-\a)\RO-(\a\P+(1-\a)\RO)\right])
	\subseteq[0,\infty),$$
	so by Theorem \ref{Q-P} again, $P$ efficiency dominates
	$Q$.
\end{proof}

Thus when swapping only one component, the new combined chain efficiency dominates
the old combined chain if and only if the new component efficiency dominates the old
component.

\section{Peskun Dominance}\label{sect:peskun}

In this section, we show that Theorem \ref{Q-P}, once established, simplifies the
proof that Peskun dominance implies efficiency dominance. Peskun dominance is another
widely used condition, introduced by Peskun in \cite{peskun} for finite state spaces,
and generalized to general state spaces by Tierney in \cite{tierney}. We shall see that
it is a stronger condition than efficiency dominance. We follow the
techniques of Tierney (see \cite{tierney}) to establish a key lemma, then show that
with Theorem \ref{Q-P} already established,
this lemma immediately gives us our result. For a different proof of the fact that
Peskun dominance implies efficiency dominance in the finite state space case,
see \cite{finite}.

We start with our key lemma.

\begin{lem}\label{pesk then pos} If $P$ and $Q$
	are Markov kernels reversible with respect to $\pi$,
	such that $P$ Peskun dominates $Q$, then $\Q-\P$ is a positive
	operator.
\end{lem}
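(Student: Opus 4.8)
The plan is to reduce the quadratic form of $\Q-\P$ to a difference of Dirichlet forms, after which the Peskun hypothesis enters exactly as the positivity of an off-diagonal difference measure. First I would record the Dirichlet form identity: for a kernel reversible (indeed, merely stationary) with respect to $\pi$, and any $f\in\LTN$,
$$\lft f,(\id-\P)f\rgt=\frac12\iint_{\X\times\X}\bigl(f(x)-f(y)\bigr)^2P(x,dy)\pi(dx).$$
This follows by expanding the square: the two ``diagonal'' terms each give $\frac12\norm{f}^2$ (using $P(x,\X)=1$ for one, and stationarity $\int P(x,dy)\pi(dx)=\pi(dy)$ for the other), while the cross term gives $-\lft f,\P f\rgt$ by the definition of $\P$. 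The left-hand side is finite because $\P$ is bounded, so the integral on the right is a well-defined finite non-negative number.

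Writing $\Q-\P=(\id-\P)-(\id-\Q)$ and applying the identity to both $P$ and $Q$ gives
$$\lft f,(\Q-\P)f\rgt=\frac12\iint_{\X\times\X}\bigl(f(x)-f(y)\bigr)^2\bigl[P(x,dy)-Q(x,dy)\bigr]\pi(dx).$$
The key observation is that the integrand $(f(x)-f(y))^2$ vanishes on the diagonal $y=x$, so the atoms of $P(x,\cdot)$ and $Q(x,\cdot)$ at $x$ contribute nothing; the inner integral against $P(x,dy)-Q(x,dy)$ equals the integral against the off-diagonal difference $\nu_x(dy)$, where $\nu_x(A):=P(x,A-\{x\})-Q(x,A-\{x\})$. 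Peskun dominance says precisely that $\nu_x(A)\geq0$ for every measurable $A$ and $\pi$-a.e.\ $x$, i.e.\ $\nu_x$ is a finite non-negative measure for $\pi$-a.e.\ $x$. Since $(f(x)-f(y))^2\geq0$ is integrated against the non-negative measure $\nu_x$ and then against $\pi$, the whole expression is $\geq0$. Hence $\lft f,(\Q-\P)f\rgt\geq0$ for every $f\in\LTN$, so $\Q-\P$ is positive.

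The main obstacle is measure-theoretic bookkeeping rather than any deep idea. I would need to pass from the pointwise-in-$A$ inequality $P(x,A-\{x\})\geq Q(x,A-\{x\})$ to the statement that $\nu_x$ is a genuine positive measure (a signed measure non-negative on every set has trivial negative part, so this is immediate once finiteness is noted, which holds since $P(x,\X-\{x\})\leq1$); to verify the measurability in $x$ of the inner integral so that the outer $\pi$-integral is defined; and to justify the diagonal splitting, namely that $\int(f(x)-f(y))^2P(x,dy)=\int(f(x)-f(y))^2\,\one_{\{y\neq x\}}\,P(x,dy)$, which is exactly the vanishing of the integrand at $y=x$. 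Once Lemma \ref{pesk then pos} is established, Theorem \ref{Q-P} immediately upgrades it to $\s(\Q-\P)\subseteq[0,\infty)$ and hence to efficiency dominance.
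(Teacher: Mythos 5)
Your proof is correct, and it arrives at the same key identity as the paper but by a different route. The paper follows Tierney's device: it introduces the point mass $\delta_x$, uses reversibility of both kernels to make the measure $\left(\delta_x(dy)+P(x,dy)-Q(x,dy)\right)\pi(dx)$ symmetric, and writes
$$\lft f,(\Q-\P)f\rgt=\frac12\iint_{x,y\in\X}\bigl(f(x)-f(y)\bigr)^2\bigl(\delta_x(dy)+P(x,dy)-Q(x,dy)\bigr)\pi(dx),$$
so that Peskun dominance enters as positivity of the combined measure $\delta_x(\cdot)+P(x,\cdot)-Q(x,\cdot)$ --- the point mass being exactly what absorbs the diagonal, where Peskun dominance gives no control. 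You instead decompose $\Q-\P=(\id-\P)-(\id-\Q)$, prove the Dirichlet-form identity for each kernel separately, and dispose of the diagonal by observing that the integrand $(f(x)-f(y))^2$ vanishes there, so only the off-diagonal difference $\nu_x$ matters. The two computations differ only in where the diagonal is handled, but your version buys something real: the Dirichlet-form identity uses only stationarity, so your argument proves the lemma for kernels that are merely stationary with respect to $\pi$ (reversibility is then needed only downstream, e.g.\ in Theorem \ref{Q-P}, where self-adjointness of $\Q-\P$ is invoked), and it avoids having to check that $\delta_x(\cdot)+P(x,\cdot)-Q(x,\cdot)$ is a genuine positive measure on sets containing $x$. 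The price is the extra bookkeeping you already flag: to subtract the two inner integrals pointwise you need each to be finite for $\pi$-a.e.\ $x$, which holds because the corresponding double integrals equal the finite quadratic forms $\lft f,(\id-\P)f\rgt$ and $\lft f,(\id-\Q)f\rgt$.
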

\begin{proof}
	For every $x\in\X$, let $\delta_x:\F\to\{0,1\}$ be the measure such that\\
	$\delta_x(E)=
	\begin{cases}
		1,\quad x\in E\\
		0,\quad o.w.
	\end{cases}$ for every $E\in\F$.

	Then notice that as $P$ and $Q$ are reversible with respect to $\pi$,
	$$\pi(dx)(\delta_x(dy)+P(x,dy)-Q(x,dy))
	=\pi(dy)(\delta_y(dx)+P(y,dx)-Q(y,dx)).$$
	Thus for every $f\in\LTN$, we have
	\begin{align*}
		\lft f,(\Q-\P)f\rgt
		&=\iint_{x,y\in\X}f(x)f(y)
		(Q(x,dy)-P(x,dy))\pi(dx)\\
		&=\int_{x\in\X}f(x)^2\pi(dx)\\
		&\hspace{1.5cm}-\iint_{x,y\in\X}f(x)f(y)
		(\delta_x(dy)+P(x,dy)-Q(x,dy))\pi(dx)\\
		&=\frac{1}{2}\iint_{x,y\in\X}
		\left(f(x)-f(y)\right)^2
		\left(\delta_x(dy)+P(x,dy)-Q(x,dy)\right)\pi(dx).
	\end{align*}

	As $P$ Peskun dominates $Q$, $\left(\delta_x(\cdot)+P(x,\cdot)-Q(x,\cdot)
	\right)$ is a positive measure for $\pi$-almost every $x\in\X$. Thus
	$$\frac{1}{2}\iint_{x,y\in\X}\left(f(x)-f(y)\right)^2
	\left(\delta_x(dy)+P(x,dy)-Q(x,dy)\right)\pi(dx)\geq0.$$
	As $f\in\LTN$ is arbitrary, $\Q-\P$ is a positive operator.
\end{proof}

Now with Theorem \ref{Q-P} we can easily show the following.

\begin{thm}\label{peskun} If $P$ and $Q$ are $\varphi$-irreducible
	Markov kernels reversible with respect to $\pi$, such that $P$
	Peskun dominates $Q$, then $P$ efficiency dominates $Q$.
\end{thm}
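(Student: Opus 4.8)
The plan is to chain together the two results that have already been established, so that essentially no new work is required: the text preceding the statement even announces that ``with Theorem \ref{Q-P} we can easily show the following.'' The heavy lifting has been done by Lemma \ref{pesk then pos}, which converts the combinatorial, ``off-diagonal'' Peskun hypothesis into the functional-analytic statement that $\Q-\P$ is a positive operator, and by Theorem \ref{Q-P} (resting on Theorem \ref{equiv}), which characterises efficiency dominance purely in terms of the positivity of $\Q-\P$. So the proof is really just an assembly of prior lemmas.

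Concretely, I would first invoke Lemma \ref{pesk then pos}: since $P$ and $Q$ are reversible with respect to $\pi$ and $P$ Peskun dominates $Q$, the operator $\Q-\P$ is positive on $\LTN$, i.e.\ $\lft f,(\Q-\P)f\rgt\geq 0$ for every $f\in\LTN$. Because $P$ and $Q$ are reversible, both $\P$ and $\Q$ are self-adjoint, so $\Q-\P$ is a bounded self-adjoint operator, and by Lemma \ref{pos eigenvalues} its positivity is equivalent to $\s(\Q-\P)\subseteq[0,\infty)$. Finally, since $P$ and $Q$ are additionally $\varphi$-irreducible, Theorem \ref{Q-P} applies directly and yields that $P$ efficiency dominates $Q$. (One could equally skip the spectral reformulation and pass from the positivity of $\Q-\P$ to efficiency dominance via Theorem \ref{equiv}, since the inequality $\lft f,\P f\rgt\leq\lft f,\Q f\rgt$ is exactly the positivity of $\Q-\P$.)

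I expect no genuine obstacle, precisely because the difficult steps were isolated earlier: the Tierney-style symmetrisation of the bilinear form into a manifestly nonnegative integrand lives entirely in Lemma \ref{pesk then pos}, and the nontrivial equivalence between the asymptotic-variance ordering and the positivity of $\Q-\P$ lives in Theorem \ref{equiv} and Theorem \ref{Q-P}. The only point worth flagging is the role of the hypotheses: $\varphi$-irreducibility is \emph{not} needed for Lemma \ref{pesk then pos}, but it \emph{is} needed to apply Theorem \ref{Q-P} (or Theorem \ref{equiv}), which is why the theorem statement carries it even though Peskun dominance alone already forces $\Q-\P\geq 0$.
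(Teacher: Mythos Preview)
Your proposal is correct and matches the paper's proof essentially verbatim: invoke Lemma \ref{pesk then pos} to obtain that $\Q-\P$ is positive (equivalently $\s(\Q-\P)\subseteq[0,\infty)$), then apply Theorem \ref{Q-P} using the $\varphi$-irreducibility and reversibility hypotheses. Your side remark that one could equivalently finish via Theorem \ref{equiv} is also accurate.
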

\begin{proof}
	By Lemma \ref{pesk then pos}, $\Q-\P$ is a positive operator (i.e.\
	$\s(\Q-\P)\subseteq[0,\infty)$), and thus by Theorem \ref{Q-P},
	$P$ efficiency dominates $Q$.
\end{proof}

The converse of Theorem \ref{peskun} is not true. For a simple example of a kernel that
effieciency dominates but doesn't Peskun dominate another kernel, see Section 7 of
\cite{finite}. Although Peskun dominance can be an easier condition to check, efficiency
dominance is a much more general condition.

\section{Functional Analysis Lemmas}\label{sect:lemma}

We seperate this section into two subsections. In the first subsection, we follow a
parrallel approach to that of Neal and Rosenthal in \cite{finite}
in the finite case, substituting linear algebra for functional analysis where
appropriate, to prove Lemma \ref{lemma 6}. In the second subsection, we follow the
techniques of Mira and Geyer in \cite{mira geyer} to prove Lemma \ref{lemma 6.5}.

\subsection{Proof of Lemma \ref{lemma 6}}\label{subsect:lemma 6}

As shown in \cite{mira geyer}, Lemma \ref{lemma 6} follows from some more
general results in \cite{bendat sherman}. However, these general results
are very technical, and require much more than basic functional analysis to
prove. So we present a different approach using basic functional analysis.
These techniques are similar to what has been done in Chapter V of \cite{bhatia},
as presented by Neal and Rosenthal in \cite{finite}, but generalized for general Hilbert spaces
rather than finite dimensional vector spaces.

We begin with some lemmas about bounded self-adjoint linear operators on a
Hilbert space $\H$.

\begin{lem}\label{add adjoint} If $X,Y,$ and $Z$ are bounded linear 
	operators on a Hilbert space $\H$ such that $\lft f,Xf\rgt
	\leq\lft f, Yf\rgt$
	for every $f\in\H$, and $Z$ is self-adjoint, then
	$\lft f,ZXZf\rgt \leq \lft f,ZYZf\rgt$ for every $f\in\H$.
\end{lem}
\begin{proof} For every $f\in\H$, $Zf\in\H$, so
	$$\lft f,ZXZf\rgt=\lft Zf,XZf\rgt\leq\lft Zf,YZf\rgt=
	\lft f,ZYZf\rgt.$$
\end{proof}

This is where the finite state space case differs from the
general case.
In the finite state space case, $\LTN$ is a finite dimensional vector
space, and thus in order to prove that $\lft f,Tf\rgt\leq\lft f,Nf\rgt$ for
every $f\in\Vect$ if and only if $\lft f,T^{-1}f\rgt\geq\lft f,N^{-1}f\rgt$
for every $f\in\Vect$, when $T$ and $N$ are self-adjoint operators, the only
additional assumption needed is that $T$ and $N$ are \textit{strictly positive},
i.e.\ that for every $f\neq0\in\Vect$, $\lft f,Tf\rgt, \lft f,Nf\rgt>0$.
This is presented by Neal and Rosenthal in \cite{finite}, Section 8.
However, in the general case, as $\LTN$ may not be finite dimensional,
$T$ and $N$ being strictly positive is not a strong enough assumption.
In the general case, it
is possible for $T$ to be strictly positive
and self-adjoint, but not be invertible in the bounded sense. Thus it is possible
that $0\in\s(T)$.
So, we must use a slightly stronger assumption. We must assume that
$\s(T),\s(N)\subseteq(0,\infty)$. In the finite case, this
is equivalent to being strictly positive, however it is stronger in general.

The following lemma is Theorem 12.12 from \cite{rudin}.
We present a more detailed proof below.

\begin{lem}\label{delt}
	If $T$ is a normal bounded linear
	operator on a Hilbert space $\H$, then there exists $\delta
	>0$ such that $\delta\norm{f}\leq\norm{Tf}$ for every $f\in\H$
	if and only if $T$ is invertible.
\end{lem}
\begin{proof}
	For the forward implication, we will show that as $T$ is normal, by the assumption
	it will follow that $T$ is bijective, and then by the assumption once more the inverse
	of $T$ is bounded.

	Firstly, notice that for every $f\in\H$ such that
	$f\neq0$, $\norm{Tf}\geq\delta\norm{f}>0$, so $Tf\neq0$. As $Tf\neq0$ for every
	$f\neq0\in\H$, $T$ is injective.

	As $T$ is normal and injective, $T^*$ is also
	injective, and as $T$ is normal\\ $\range(T)^{\perp}\;=\;\nul(T^*)\;=\;\{0\}$, so
	the range of $T$ is dense in $\H$.

	Now we will show that the range of $T$ is closed,
	and thus $T$ is surjective as the range of $T$ is also dense in $\H$.
	For any $f\in\overline{\range(T)}$, there exists $\{g_n\}_{n\in\N}\subseteq
	\H$ such that $Tg_n\to f$. So for every $m,n\in\N$, by our assumption
	$$\norm{g_n-g_m}=\delta^{-1}\delta\norm{g_n-g_m}\leq\delta^{-1}
	\norm{Tg_n-Tg_m},$$
	so $\{g_n\}\subseteq\H$ is Cauchy as $\{Tg_n\}$ converges. Thus as $\H$ is
	complete (as it is a Hilbert space), there exists $g\in\H$ such that $g_n\to g$.
	As $T$ is bounded, it is also continuous, and thus $Tg_n\to Tg$, and as the limits
	are unique and $Tg_n\to f$ as well, $Tg=f$ and $f\in\range(T)$.
	So $\range(T)$ is closed.

	So as $T$ is bijective, there exists an operator $T^{-1}$ such that $TT^{-1}f=f$
	for every $f\in\H$. By our assumption, letting $C=\delta^{-1}$, we have
	$$C\norm{f}=C\norm{TT^{-1}f}\geq\norm{T^{-1}f}$$
	for every $f\in\H$, and so $T^{-1}$ is bounded.

	For the converse, say $T$ is invertible. Then let $\delta=
	\norm{T^{-1}}^{-1}$. Then for every $f\in\H$, by definition of $\delta$,
	$$\delta\norm{f}=\delta\norm{T^{-1}Tf}\leq\delta\norm{T^{-1}}\norm{Tf}=\norm{Tf}.$$
\end{proof}
\begin{rem}
	The assumption that $T$ be normal in the preceding lemma is only to show us that
	$T$ is bijective in the ``only if'' direction. In general, if $T$ is a bounded linear
	operator, not necessarily normal, if $T$ is bijective and there exists $\delta>0$
	such that $\norm{Tf}\geq\delta\norm{f}$ for every $f\in\H$, then $T$ is invertible.
	Furthermore, the ``if'' direction of Lemma \ref{delt} does not require that $T$
	be normal.
\end{rem}

And now we can prove Lemma \ref{lemma 6}.

\bigskip
\noindent
\textit{Proof of Lemma \ref{lemma 6}.}
	Say $\lft f,Tf\rgt\leq\lft f,Nf\rgt$ for every $f\in\H$.
	
	As $\s(N)\subseteq(0,\infty)$, $N$ is invertible, and $N^{-1/2}$
	is a well defined bounded self-adjoint linear operator.
	Similarly, $T^{1/2}$ is also a well defined bounded self-adjoint
	linear operator.
	
	So, for every $f\in\H$, we have
	$$\lft f,N^{-1/2}TN^{-1/2}f\rgt=\lft T^{1/2}N^{-1/2}f,T^{1/2}N^{-1/2}f\rgt
	=\norm{T^{1/2}N^{-1/2}f}^2\geq0.$$

	Furthermore, as $\s(T)\subseteq(0,\infty)$, $T$ is invertible,
	so by Lemma \ref{delt}, there exists $\de_T>0$ such that
	$\norm{Tf}\geq\de_T\norm{f}$ for every $f\in\H$.
	Also, notice that $\s(N^{-1/2})\subseteq(0,\infty)$,
	thus by Lemma \ref{delt},
	there exists $\de_1>0$ such that
	$\norm{N^{-1/2}f}\geq\de_1\norm{f}$ for every $f\in\H$.
	So, for every $f\in\H$,
	$$\norm{N^{-1/2}TN^{-1/2}f}\geq
	\de_1\norm{TN^{-1/2}f}
	\geq\de_1\de_T\norm{N^{-1/2}f}
	\geq\de_1\de_T\de_1\norm{f},$$
	so by Lemma \ref{delt},
	$N^{-1/2}TN^{-1/2}$ is invertible,
	and thus $0\not\in\s(N^{-1/2}TN^{-1/2})$.

	By using Lemma \ref{add adjoint} with $X=T$, $Y=N$ and
	$Z=N^{-1/2}$, for every $f\in\H$,
	$$\lft f,N^{-1/2}TN^{-1/2}f\rgt\leq\lft f,N^{-1/2}NN^{-1/2}f\rgt
	=\norm{f}^2.$$
	So if $\l>1$, for any $f\in\H$, as $0\leq
	\lft N^{-1/2}TN^{-1/2}f,f\rgt\leq\norm{f}^2$, by the
	Cauchy-Schwartz inequality,
	$\norm{(N^{-1/2}TN^{-1/2}-\l)f}\geq|1-\l|\norm{f}$,
	and as $|1-\l|>0$, by Lemma \ref{delt},
	$(N^{-1/2}TN^{-1/2}-\l)$ is invertible, so
	$\l\not\in\s(N^{-1/2}TN^{-1/2})$.

	Thus we have $\s(N^{-1/2}TN^{-1/2})\subseteq(0,1]$.

	Let $K$ denote the inverse of $N^{-1/2}TN^{-1/2}$, i.e.\ let
	$K=(N^{-1/2}TN^{-1/2})^{-1}$. Furthermore, we have
	$\s(K)\subseteq[1,\infty)$. So for every $f\in\H$,
	$\norm{f}^2\leq\lft f,Kf\rgt$.

	So by using Lemma \ref{add adjoint}, with $X=\id$,
	$Y=K$ and $Z=N^{-1/2}$, for every $f\in\H$,
	\begin{align*}
		\lft f,N^{-1}f\rgt
		&=\lft f,N^{-1/2}\id N^{-1/2}f\rgt\\
		&\leq\lft f,N^{-1/2}K N^{-1/2}f\rgt\\
		&=\lft f,N^{-1/2}(N^{-1/2}TN^{-1/2})^{-1}
		N^{-1/2}f\rgt\\
		&=\lft f,N^{-1/2}N^{1/2}T^{-1}N^{1/2}
		N^{-1/2}f\rgt\\
		&=\lft f,T^{-1}f\rgt.
	\end{align*}

	For the other direction, replace $N$ with $T^{-1}$
	and $T$ with $N^{-1}$.
\qed

\subsection{Proof of Lemma \ref{lemma 6.5}}\label{subsect:lemma 6.5}

Here we follow the same steps of \cite{mira geyer} to prove Lemma \ref{lemma 6.5}.

\begin{lem}\label{sub}
	If $T$ is a self-adjoint, injective, positive and bounded operator on
	the Hilbert space $\H$, then $\domain(T^{-1})\subseteq\domain(T^{-1/2})$.
\end{lem}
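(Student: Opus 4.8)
The plan is to pass to the spectral representation of $T$ and reduce the asserted domain containment to an elementary inequality between the functions $\l^{-1}$ and $\l^{-2}$ on the spectrum. Since $T$ is bounded, self-adjoint, and positive, it is normal with $\s(T)\subseteq[0,\norm{T}]$, and the Spectral Theorem gives $T=\int_{\s(T)}\l\,\SM_T(d\l)$. Because $T$ is injective, $0$ is not an eigenvalue of $T$, so by Theorem 12.29 (b) of \cite{rudin} (exactly as in Lemma \ref{E(1)=0}) we have $\SM_T(\{0\})=0$; hence $\EM_{f,T}$ assigns no mass to $\{0\}$ for every $f\in\H$, and the functions $\l\mapsto\l^{-1/2}$ and $\l\mapsto\l^{-1}$ are defined $\EM_{f,T}$-almost everywhere.

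First I would record the domain descriptions coming from the (possibly unbounded) functional calculus, just as in the proof of Lemma \ref{space of fin var}: writing $T^{-1/2}=\int\l^{-1/2}\SM_T(d\l)$ and $T^{-1}=\int\l^{-1}\SM_T(d\l)$, one has
\begin{align*}
\domain(T^{-1/2})&=\left\{f\in\H:\int_{\s(T)}\l^{-1}\,\EM_{f,T}(d\l)<\infty\right\},\\
\domain(T^{-1})&=\left\{f\in\H:\int_{\s(T)}\l^{-2}\,\EM_{f,T}(d\l)<\infty\right\},
\end{align*}
since the domain of $\phi(T)$ is precisely $\left\{f\in\H:\int|\phi|^2\,d\EM_{f,T}<\infty\right\}$, and $\int\l^{-1}\SM_T(d\l)$ is the operator inverse $T^{-1}$ (including equality of domains).

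The key step is then the pointwise bound: for every $\l\in(0,\norm{T}]$ we have $\l^{-1}=\l\cdot\l^{-2}\leq\norm{T}\,\l^{-2}$, because $\l\leq\norm{T}$ on $\s(T)$. Integrating this against the positive measure $\EM_{f,T}$ yields $\int_{\s(T)}\l^{-1}\,\EM_{f,T}(d\l)\leq\norm{T}\int_{\s(T)}\l^{-2}\,\EM_{f,T}(d\l)$. Hence if $f\in\domain(T^{-1})$, the right-hand side is finite, so the left-hand side is finite and $f\in\domain(T^{-1/2})$, which is exactly the claimed inclusion.

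I expect the only delicate point to be the justification of the domain descriptions for the unbounded functions $\l^{-1/2}$ and $\l^{-1}$ together with the fact that $\EM_{f,T}$ does not charge $0$; both are supplied by the spectral-calculus facts already invoked for Lemma \ref{space of fin var} (see \cite{dunford schwartz}, Theorem XII.2.9) and the argument of Lemma \ref{E(1)=0}. Once these are in hand, everything reduces to the one-line inequality above, so there is no substantive obstacle beyond setting up the functional calculus carefully.
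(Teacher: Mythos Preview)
Your argument is correct but takes a different route from the paper. The paper's proof is a short, direct computation: given $f\in\domain(T^{-1})=\range(T)$, pick $g\in\H$ with $Tg=f$, set $h=T^{1/2}g$, and observe $T^{1/2}h=T^{1/2}T^{1/2}g=Tg=f$, so $f\in\range(T^{1/2})=\domain(T^{-1/2})$. This uses only the factorisation $T=T^{1/2}T^{1/2}$ and the identification of domains with ranges, with no appeal to the unbounded functional calculus or the domain characterisation via $\int|\phi|^2\,d\EM_{f,T}$.

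Your spectral-calculus approach is valid and has the merit of being systematic: once the domain descriptions are in place, the inclusion reduces to the transparent pointwise bound $\l^{-1}\le\norm{T}\l^{-2}$, and the same template would immediately yield analogous inclusions for other powers. The cost is that you must invoke the machinery of Theorem XII.2.9 of \cite{dunford schwartz} and the argument of Lemma \ref{E(1)=0} to justify that $\EM_{f,T}(\{0\})=0$ and that the domain formulas hold for unbounded $\phi$. The paper's argument sidesteps all of this, needing only that $T^{1/2}$ is a well-defined bounded operator on $\H$ squaring to $T$, which makes it both shorter and more self-contained.
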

\begin{proof}
	Let $f\in\domain(T^{-1})=\range(T)$. Then there exists $g\in\H$
	such that $Tg=f$. So, as $T$ is positive, $T^{1/2}$ is
	well-defined, so $T^{1/2}g=h\in\H$. Thus notice
	$T^{1/2}h=T^{1/2}T^{1/2}g=Tg=f$, so $f\in\range(T^{1/2})
	=\domain(T^{-1/2})$.
\end{proof}

The next lemma is a generalization of Lemma 3.1 of \cite{mira geyer} from
real Hilbert spaces to possibly complex ones. This generalization is simple
but unnecessary for us, as we are dealing with real Hilbert spaces anyways.

\begin{lem}\label{gen form}
	If $T$ is a self-adjoint, injective, positive and bounded linear
	operator on the Hilbert space $\H$,
	then for every $f\in\H$,
	$$\lft f,Tf\rgt=\sup_{g\in\domain(T^{-1/2})}
	\left[\lft g,f\rgt+\lft f,g\rgt-\lft T^{-1.2}g,T^{-1/2}g\rgt
	\right].$$
\end{lem}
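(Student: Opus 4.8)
The plan is to recognise the right-hand side as a concave quadratic functional of $g$ whose supremum can be evaluated by completing the square. Write $\Phi(g):=\lft g,f\rgt+\lft f,g\rgt-\lft T^{-1/2}g,T^{-1/2}g\rgt$ for $g\in\domain(T^{-1/2})$. Since $T$ is positive, bounded and self-adjoint, $T^{1/2}$ is a well-defined bounded self-adjoint operator on all of $\H$, and $T^{-1/2}$ is its (possibly unbounded) inverse, so that $T^{1/2}T^{-1/2}g=g$ for every $g\in\domain(T^{-1/2})$. This last identity, together with the fact that a bounded self-adjoint operator may be moved freely across the inner product against \emph{any} vector of $\H$, is all that is needed to manipulate the otherwise awkward $T^{-1/2}$.

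First I would establish the upper bound $\Phi(g)\le\lft f,Tf\rgt$ for every admissible $g$ by expanding the single norm $\norm{T^{1/2}f-T^{-1/2}g}^2$. Using $\lft T^{1/2}f,T^{1/2}f\rgt=\lft f,Tf\rgt$ and pushing $T^{1/2}$ across the cross terms to get $\lft T^{1/2}f,T^{-1/2}g\rgt=\lft f,T^{1/2}T^{-1/2}g\rgt=\lft f,g\rgt$ (and symmetrically $\lft T^{-1/2}g,T^{1/2}f\rgt=\lft g,f\rgt$), the expansion collapses to
$$\norm{T^{1/2}f-T^{-1/2}g}^2=\lft f,Tf\rgt-\Phi(g).$$
Because the left-hand side is nonnegative, $\Phi(g)\le\lft f,Tf\rgt$ for every $g\in\domain(T^{-1/2})$, so $\lft f,Tf\rgt$ is an upper bound for the supremum.

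Next I would show the bound is attained, making the inequality an equality. The completed square vanishes precisely when $T^{-1/2}g=T^{1/2}f$, i.e.\ at the candidate $g=Tf$. To make this legitimate I must check $Tf\in\domain(T^{-1/2})$: since $Tf\in\range(T)=\domain(T^{-1})$ and $T$ is self-adjoint, injective, positive and bounded, Lemma \ref{sub} gives $\domain(T^{-1})\subseteq\domain(T^{-1/2})$, whence $Tf\in\domain(T^{-1/2})$ and $T^{-1/2}(Tf)=T^{1/2}f$. Substituting $g=Tf$ then gives $\Phi(Tf)=\lft Tf,f\rgt+\lft f,Tf\rgt-\lft T^{1/2}f,T^{1/2}f\rgt=2\lft f,Tf\rgt-\lft f,Tf\rgt=\lft f,Tf\rgt$, which matches the upper bound and closes the argument.

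The main obstacle I anticipate is purely the bookkeeping around the unbounded operator $T^{-1/2}$: one must apply $T^{-1/2}$ only to vectors in its domain, and the crucial cancellation $\lft T^{1/2}f,T^{-1/2}g\rgt=\lft f,g\rgt$ has to be justified from the boundedness and self-adjointness of $T^{1/2}$ (so its adjoint relation holds against the vector $T^{-1/2}g\in\H$) rather than from any direct manipulation of $T^{-1/2}$ itself. Once that single move is handled carefully, the remainder is a routine expansion of one norm plus the domain check supplied by Lemma \ref{sub}.
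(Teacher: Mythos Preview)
Your proposal is correct and follows essentially the same completing-the-square argument as the paper: both expand a single nonnegative norm to obtain $\lft f,Tf\rgt-\Phi(g)\ge0$, then invoke Lemma~\ref{sub} to place the maximiser $g=Tf$ in $\domain(T^{-1/2})$. The only cosmetic difference is that the paper writes the squared norm as $\norm{T^{-1/2}(Tf-g)}^2$ and moves the unbounded $T^{-1/2}$ across the inner product, whereas you write it as $\norm{T^{1/2}f-T^{-1/2}g}^2$ and move only the bounded $T^{1/2}$---a slightly cleaner justification, but the same proof.
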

\begin{proof}
	As $T$ is injective and self-adjoint, the inverse of $T$,
	$T^{-1}:\range(T)\to\H$, is densely defined and self-adjoint
	(see Proposition X.2.4 (b) of \cite{conway}).

	For every $f\in\range(T)=\domain(T^{-1})$, there exists
	$g\in\H$ such that $Tg=f$. Thus as $T$ is positive and
	self-adjoint,
	$$\lft f,T^{-1}f\rgt=\lft Tg,g\rgt=\lft g,Tg\rgt\geq0,$$
	so $T^{-1}$ is also positive. In particular, this means
	that $T^{1/2}$ and $T^{-1/2}$ are well-defined.

	By Lemma \ref{sub}, $\domain(T^{-1})\subseteq\domain(T^{-1/2})$.
	So, let $f\in\H$. Let $h=Tf$. Then for every
	$g\in\domain(T^{-1/2})= \range(T^{1/2})$,
	\begin{align*}
		\lft f,&Tf\rgt-\left(\lft g,f\rgt+\lft f,g\rgt
			-\lft T^{-1/2}g,T^{-1/2}g\rgt\right)\\
		&=\lft T^{1/2}f,T^{1/2}f\rgt-\lft g,T^{-1}h\rgt
			-\lft T^{-1}h,g\rgt
			+\lft T^{-1/2}g,T^{-1/2}g\rgt\\
		&=\lft T^{-1/2}h,T^{-1/2}h\rgt-\lft T^{-1/2}g,
			T^{-1/2}h\rgt-\lft T^{-1/2}h,T^{-1/2}g\rgt
			+\lft T^{-1/2}g,T^{-1/2}g\rgt\\
		&=\lft T^{-1/2}(h-g),T^{-1/2}(h-g)\rgt\\
		&=\norm{T^{-1/2}(h-g)}^2\\
		&\geq0.
	\end{align*}

	As $h\in\domain(T^{-1})$ and $\domain(T^{-1})\subseteq
	\domain(T^{-1/2})$, $h\in\domain(T^{-1/2})$. So, as $T$ is
	self-adjoint,
	$$\lft h,f\rgt+\lft f,h\rgt-\lft T^{-1/2}h, T^{-1/2}h\rgt
	=\lft Tf,f\rgt+\lft f,Tf\rgt-\lft Tf, T^{-1}Tf\rgt
	=\lft f,Tf\rgt.$$
\end{proof}

With Lemma \ref{gen form} established, the proof of Lemma \ref{lemma 6.5} is straightforward.

\bigskip
\noindent
\textit{Proof of Lemma \ref{lemma 6.5}.}
	Let $f\in\H$. Then by Lemma \ref{gen form},
	\begin{align*}
		\lft f,Tf\rgt
		&=\sup_{g\in\domain(T^{-1/2})}\lft g,f\rgt+\lft f,g\rgt
		-\lft T^{-1/2}g,T^{-1/2}g\rgt\\
		&\leq\sup_{g\in\domain(N^{-1/2})}\lft g,f\rgt+
		\lft f,g\rgt -\lft N^{-1/2}g,N^{-1/2}g\rgt\\
		&=\lft f,Nf\rgt.
	\end{align*}
\qed

\bigskip
\noindent
\textbf{Acknowledgements.} We thank Austin Brown and Heydar Radjavi for some very helpful
discussions about the Spectral Theorem which allowed us to prove Lemma \ref{space of fin var}.
This work was partially funded by NSERC of Canada.

\newpage

\end{document}